\newtheorem{thm}{Theorem}
\newtheorem{lem}[thm]{Lemma}
\newtheorem{conj}[thm]{Conjecture}
\newtheorem{claim}{Claim}
\newtheorem{fact}{Fact}
\newtheorem{case}{Case}
\begin{document}

\title{The spectral radius of graphs without long cycles
}
\author{Jun Gao$^a$, \quad Xinmin Hou$^b$\\
\small $^{a,b}$ Key Laboratory of Wu Wen-Tsun Mathematics\\
\small School of Mathematical Sciences\\
\small University of Science and Technology of China\\
\small Hefei, Anhui 230026, China.\\
}

\date{}

\maketitle

\begin{abstract}
Nikiforov conjectured that for a given integer $k\ge 2$, any graph $G$ of sufficiently large order $n$ with spectral radius $\mu(G)\geq \mu(S_{n,k})$ (or $\mu(G)\ge \mu(S_{n,k}^+))$ contains $C_{2k+1}$ or $C_{2k+2}$(or $C_{2k+2}$), unless $G=S_{n,k}$ (or $G=S_{n,k}^+)$, where $C_\ell$ is a cycle of length $\ell$ and $S_{n,k}=K_k\vee \overline{K_{n-k}}$, the join graph of a complete graph of order $k$ and an empty graph on $n-k$ vertices,  and $S_{n,k}^+$ is the graph obtained from $S_{n,k}$ by adding an edge in the independent set of $S_{n,k}$.
In this paper, a weaker version of Nikiforov's conjecture is considered,
we prove that for a given integer $k\ge 2$, any graph $G$ of sufficiently large order $n$ with spectral radius $\mu(G)\geq \mu(S_{n,k})$ (or $\mu(G)\ge \mu(S_{n,k}^+))$ 
contains a cycle $C_{\ell}$ with $\ell \geq 2k+1$ (or $C_{\ell}$ with $\ell \geq 2k+2$), unless $G=S_{n,k}$ (or $G=S_{n,k}^+)$. These results also imply a result of Nikiforov given in [Theorem 2, The spectral radius of graphs without paths and cycles of specified length, LAA, 2010].

\noindent{\bf Keywords:} Spectral radius, extremal problem, long cycles

\noindent{\bf MSC:} 05C35, 05C50
\end{abstract}

\section{Introduction}
In this paper, all graphs considered are simple and finite. Let $G$ be a graph with vertex set $V(G)$ and edge set $E(G)$. For $x\in V(G)$, let $N_G(x)$ be the set of neighbors of $x$ in $G$ and let $N_G[x]=N_G(x)\cup\{x\}$.
In particular, $d_G(x)=|N_G(x)|$ is the degree of $x$ in $G$. For non-empty subset $S\subseteq V$,
let $G[S]$ be the subgraph of $G$ induced by $S$ and write $e_G(S)$ for $e(G[S])$. For disjoint subsets $X$ and $Y$ of $V(G)$, let $E_G(X, Y)$ be the set of edges with one end in $X$ and the other in $Y$ and let $e_G(X, Y)=|E_G(X, Y)|$.
We write $E_G(x,Y)$ instead of $E_G(\{x\}, Y)$ for convenience. All the subscripts defined here will be omitted if $G$ is clear from the context.

 Given a graph $G$,
let $A(G)$ and $\mu(G)$ denote the adjacency matrix and the largest eigenvalue of $A(G)$, called the {\it spectral radius} of $G$.
For a pair of adjacent vertices $u,v$ of $G$, define $$P_v(u)= \{ x \ |\  x\in N(u)\setminus\{v\}, \mbox{ but } x\notin N(v) \}$$ to be the set of private neighbors of $u$ with respect to $v$. Define $G_{u\rightarrow v}$ to be the graph obtained from $G$ by deleting the edges joining $u$ to vertices in $P_v(u)$ and adding new edges connecting $v$ to vertices in $P_v(u)$, that is $V(G_{u\rightarrow v})=V(G)$ and $$E(G_{u\rightarrow v})=(E(G)\setminus E_G(u, P_v(u)))\cup E_{\overline{G}}(v, P_v(u)).$$

As Tur\'{a}n type problems ask for maximum number of edges in graphs of given order not containing a specified family of subgraphs, Brualdi-Solheid-Tur\'{a}n type problems ask for maximum spectral radius of graphs of given order not containing a specified family of subgraphs. A survey about the subject can be found  in~\cite{survy of SEP}.
In this paper, we mainly concern a Brualdi-Solheid-Tur\'{a}n type conjecture proposed by Nikiforov~\cite{SEP of paths and cycles}.    Let $S_{n,k}$ be the graph obtained by joining every vertex of a complete graph of order $k$ to every vertex of an independent set of order $n-k$, that is $S_{n,k}=K_k\vee \overline{K_{n-k}}$, the join graph of $K_k$ and $\overline{K_{n-k}}$, and let $S_{n,k}^+$ be the graph obtained from $S_{n,k}$ by adding a single edge to the independent set of $S_{n,k}$. { For $n=2$ and $k=1$, we set $S_{2,1}=S_{2,1}^+=K_2$.} Write $P_\ell$ and $C_\ell$  for  a path and cycle of order $\ell$ and $P_{\ge \ell}$ and $C_{\ge \ell}$ for a path and cycle of order at least $\ell$.  In~\cite{SEP of paths and cycles}, Nikiforov proved the following theorem and proposed a related conjecture as follows.

\begin{thm}[Nikiforov, 2010]\label{THM: P_2k+2}
Let $k\geq2$, $n>{2^{4k}}$ and $G$ be a graph of order $n$.

(a) If $\mu(G)\geq \mu(S_{n,k})$, then $G$ contains a $P_{2k+2}$ unless $G=S_{n,k}$.

(b) If $\mu(G)\geq \mu(S_{n,k}^+)$, then $G$ contains a $P_{2k+3}$ unless $G=S_{n,k}^+$.
\end{thm}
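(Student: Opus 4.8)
The plan is to prove the equivalent extremal statement: among all $P_{2k+2}$-free graphs of order $n$, the spectral radius is maximized \emph{uniquely} by $S_{n,k}$. Granting this, a $P_{2k+2}$-free graph $G$ with $\mu(G)\ge\mu(S_{n,k})$ must attain the maximum and hence equal $S_{n,k}$, which is part (a); part (b) is the same statement with $P_{2k+3}$ and $S_{n,k}^+$. First I would record the exact value $\mu(S_{n,k})=\tfrac12\big(k-1+\sqrt{(k-1)^2+4k(n-k)}\big)$ from the equitable partition $(K_k,\overline{K_{n-k}})$, so that $\mu(S_{n,k})>\sqrt{k(n-k)}$, and note that $S_{n,k}$ is $P_{2k+2}$-free because a longest path alternates between the $k$ clique vertices and at most $k+1$ independent ones, giving a $P_{2k+1}$. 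I would then fix an extremal graph $G$ (a maximizer of $\mu$ over $P_{2k+2}$-free graphs of order $n$), which I may assume connected by a standard reduction to the component carrying the Perron vector, and let $x>0$ be its unit Perron eigenvector.

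The engine is the transformation $G_{u\to v}$ defined above. A direct computation gives $x^{\top}A(G_{u\to v})x-x^{\top}A(G)x=2(x_v-x_u)\sum_{w\in P_v(u)}x_w$, so whenever $x_v\ge x_u$ the Rayleigh quotient does not drop and $\mu(G_{u\to v})\ge\mu(G)$. The crucial combinatorial input is a lemma asserting that $G_{u\to v}$ contains no path longer than a longest path of $G$; in particular $G_{u\to v}$ is again $P_{2k+2}$-free, so $\mu(G_{u\to v})\le\mu(G)$ by extremality. Combining the two bounds forces $\mu(G_{u\to v})=\mu(G)$ for every adjacent pair with $x_v\ge x_u$.

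From this equality I would extract the exact structure. Since $G_{u\to v}$ is connected and shares the maximal Rayleigh value with $G$, the vector $x$ is also a Perron eigenvector of $G_{u\to v}$, so $\big(A(G_{u\to v})-A(G)\big)x=0$; reading the $u$-th coordinate gives $\sum_{w\in P_v(u)}x_w=0$, whence $P_v(u)=\varnothing$ and $N[u]\subseteq N[v]$ for every adjacent pair with $x_v\ge x_u$. Applying this to the vertex $v_1$ of maximum $x$-value shows that each neighbour of $v_1$ keeps its closed neighbourhood inside $N[v_1]$, so $N[v_1]$ sends no edge to its complement; connectivity then forces $v_1$ to be universal. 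Peeling off universal vertices one at a time (the nesting condition is inherited after deleting a universal vertex) produces $G=K_t\vee H$ with $H$ having no universal vertex. The $P_{2k+2}$-free condition bounds $t$ by $k$ (a larger clique core, or an edge inside $H$ together with $t=k$, already yields a $P_{2k+2}$), and since $\mu(K_t\vee\overline{K_{n-t}})$ increases with $t$ and dominates any value attainable with a smaller clique core for large $n$, the optimum is $t=k$ with $H=\overline{K_{n-k}}$, i.e.\ $G=S_{n,k}$.

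The main obstacle is the path-monotonicity lemma for $G_{u\to v}$: showing that any path of $G_{u\to v}$ traces to a path of $G$ of at least the same length. The only delicate case is when a longest path uses a new edge $vw$ with $w\in P_v(u)$ while already passing through $u$, where one must reroute along the old edges $uv$ and $uw$ without repeating a vertex; this needs a case analysis on the position of $u$ relative to the new edges (of which there are at most two, since they all meet $v$). I expect the remaining steps, namely the eigenvalue increment, the eigenvector equality forcing $P_v(u)=\varnothing$, and the peeling of universal vertices, to be routine. For part (b) the same scheme applies once the single independent-set edge of $S_{n,k}^+$ is carried through: the peeling terminates in $K_t\vee H$ with $H$ a single edge plus isolated vertices, and the inequality $2t+2\le 2k+2$ together with monotonicity in $t$ singles out $S_{n,k}^+$.
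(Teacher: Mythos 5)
Your programme has a sound structural half, and it is closer to the paper than you might expect: the rotation $G_{u\to v}$ and the conclusion that in an extremal graph every edge $uv$ has $P_v(u)=\emptyset$ or $P_u(v)=\emptyset$ is exactly the engine of the paper's Lemma~\ref{LEM: Move-Neighbor} and the Remark following it (the paper proves the cycle version $c(G')\le c(G)$ because it proves the stronger cycle theorem, Theorem~\ref{:Main}, and obtains the path statement as a corollary; your path-monotonicity lemma is true and provable by the same two-case rerouting, since all new edges meet $v$ and every $G'$-neighbour of $u$ is $v$ or a common neighbour). Your next steps are also essentially right: equality in the Rayleigh quotient makes $x$ a top eigenvector of $A(G_{u\to v})$, the $u$-th coordinate of $(A(G_{u\to v})-A(G))x=0$ kills $P_v(u)$, the maximum-entry vertex is universal, and the nesting property passes to $G-v_1$, so peeling yields $G=K_t\vee H$ with $H$ having no universal vertex. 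Two small caveats: the connectivity reduction is not free (a maximizer over all $n$-vertex graphs may be disconnected, and you must then invoke the connected statement at a smaller order $n_1$, covering $n_1$ below the threshold by crude bounds and using monotonicity of $\mu(S_{m,k})$ in $m$); and the peeling only makes vertices universal in their component, which is fine but should be said.

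The genuine gap is the last step. The assertion that $\mu(K_t\vee\overline{K_{n-t}})$ ``dominates any value attainable with a smaller clique core for large $n$'' is precisely the quantitative heart of the theorem, not a routine remark: you must show that every $P_{2k+2}$-free graph of the form $K_t\vee H$ with $t\le k-1$ has $\mu<\mu(S_{n,k})$, and this does not follow from edge counts or naive spectral bounds. Fact~\ref{FACT: EX-P_{2k+2}} gives $e(G)<e(S_{n,k})$, but spectral radius is not monotone in edge number; $\mu\le\sqrt{2e(G)}$ and row-sum bounds on $A^2$ only give $\sqrt{cn}$ with $c>k$ in the critical range. Concretely, for $t=k-1$ the admissible $H$ (a star plus isolated vertices, say) happens to embed $G$ properly into $S_{n,k}$, so that case is easy, but for smaller $t$ one can have $H$ a forest of many stars, where $G$ is not a subgraph of $S_{n,k}$ and the crude bound is roughly $\sqrt{(2t+2)n}$, which does not beat $\mu(S_{n,k})\approx\sqrt{k(n-k)}$; the comparison must be accurate to an additive $O(1)$ on a $\Theta(\sqrt n)$ quantity. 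This is exactly why the paper runs the eigenvector-weighted characteristic-polynomial counting of Lemma~\ref{LEM: Au-Matrix} (the $B_u\le 0$ computations with auxiliary quadratics) and the join-comparison Lemma~\ref{LEM: k=2} through all of Case 2 of the proof of Theorem~\ref{:Main}. Your structural reduction would genuinely shorten that analysis (the components of $H$ are nested, with longest-path lengths summing to at most about $2k+1-t$), but some estimate of this weighted kind still has to be supplied; as written, ``the optimum is $t=k$'' restates the theorem rather than proving it, and the same omission affects the uniqueness analysis in part (b).
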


\begin{conj}[Nikiforov, 2010]\label{CONJ: conjecture 1}
Let $k\geq2$ and $G$ be a graph of sufficiently large order $n$.

(a) If $\mu(G)\geq \mu(S_{n,k})$, then $G$ contains a $C_{2k+1}$ or $C_{2k+2}$ unless $G=S_{n,k}$.

(b) If $\mu(G)\geq \mu(S_{n,k}^+)$, then $G$ contains a $C_{2k+2}$ unless $G=S_{n,k}^+$.

\end{conj}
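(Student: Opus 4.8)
I will argue by contraposition, presenting part (a); part (b) is the same argument with $C_{2k+1}$ dropped from the list of forbidden cycles, with target length $2k+2$ in place of $2k+1$, and with $S_{n,k}^+$ in place of $S_{n,k}$. So assume $n$ is large, $G\neq S_{n,k}$, and $G$ contains \emph{neither} $C_{2k+1}$ \emph{nor} $C_{2k+2}$; I must show $\mu(G)<\mu(S_{n,k})$. The essential feature separating this from the circumference version (which forbids \emph{every} cycle of length $\ge 2k+1$) is that here $G$ may well contain cycles of length $\ge 2k+3$. One cannot simply reduce to ``circumference $\le 2k$'': a long cycle can sit in a sparse part of $G$ while the large eigenvalue is generated elsewhere --- for instance, a disjoint union of a dense $K_k\vee\overline{K_m}$ with a single $C_{2k+3}$ avoids both forbidden lengths yet contains a cycle of length $\ge 2k+3$. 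Hence the analysis must be \emph{local}: I will show that the dense region responsible for the large eigenvalue cannot avoid both $C_{2k+1}$ and $C_{2k+2}$ unless it is exactly $S_{n,k}$.

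First I extract the spectral structure. With $\mu:=\mu(G)\ge\mu(S_{n,k})=\tfrac12\big(k-1+\sqrt{(k-1)^2+4k(n-k)}\big)=\sqrt{kn}\,(1+o(1))$ and $\mathbf x$ the Perron vector normalised by $x_u=\max_v x_v=1$, the walk identities $\mu=\mu x_u=\sum_{v\sim u}x_v\le d(u)$ and $\mu^2=\sum_{v\sim u}\sum_{w\sim v}x_w\le\sum_{v\sim u}d(v)$ give $d(u)\ge\sqrt{kn}(1+o(1))$ and $\sum_{v\sim u}d(v)\ge kn(1+o(1))$, so $e(G)\ge\tfrac12\mu^2$. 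Standard weight-truncation then shows that the vertices carrying all but an $o(1)$ fraction of $\sum_v x_v$ lie in a bounded-size \emph{core}: deleting all vertices with $x_v<\varepsilon$ changes $\mu$ by $o(1)$, and the eigenvalue equation forces the surviving high-weight set $D$ to behave like the clique part of $S_{n,k}$, with $|D|$ close to $k$ and each vertex of $D$ of degree $(1-o(1))n$. The remaining vertices have small weight and attach almost exclusively to $D$.

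The crux is a \emph{local cycle-shortening lemma}: within the dense region around $D$, a cycle of length $\ge 2k+3$ together with the local edge density forces a $C_{2k+1}$ or a $C_{2k+2}$, contradicting the hypothesis. Given such a cycle $C=v_0v_1\cdots v_{\ell-1}v_0$, a chord $v_iv_j$ at cyclic distance $d$ produces two cycles of lengths $d+1$ and $\ell-d+1$ (summing to $\ell+2$), while a vertex $w$ adjacent to both $v_i$ and $v_j$ produces cycles of lengths $d+2$ and $\ell-d+2$ (summing to $\ell+4$); choosing $d\in\{2k-1,2k,2k+1\}$ lands one new cycle at length $2k+1$ or $2k+2$. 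The bound $e(G)\gtrsim\tfrac12 kn$ and the near-full degrees in $D$ guarantee, by a counting argument, short-range chords or common neighbours at the required cyclic distances, and iterating descends through feasible lengths into $\{2k+1,2k+2\}$. The hard part --- and exactly the gap between this conjecture and the theorem proved in the paper --- is to run this descent so that it never \emph{jumps over both} $2k+1$ and $2k+2$ at once: one must control the parity and the admissible step sizes uniformly in $\ell$ and in the circumference, and must ensure the chords used live in the dense core rather than in a sparse appendage (where no shortening is available). I expect the parity obstruction to absorb most of the real work, likely requiring a dichotomy between bipartite-like and non-bipartite behaviour of $D$ and its neighbourhood.

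Granting the local lemma, the dense core and its neighbourhood contain no cycle of length $\ge 2k+1$, so by the Erd\H{o}s--Gallai bound they span at most $\sim kn$ edges and, by Kopylov--Woodall-type stability, are within $o(n)$ edges of $S_{n,k}$. A final spectral perturbation upgrades ``close to $S_{n,k}$'' to ``equal to $S_{n,k}$'': using the compression $G_{u\to v}$ defined above, which does not decrease $\mu$ whenever $x_v\ge x_u$, I push $G$ toward a threshold graph without lowering the spectral radius, and among threshold graphs on $n$ vertices avoiding $C_{2k+1}$ and $C_{2k+2}$ the unique maximiser of $\mu$ is $S_{n,k}$. This gives $\mu(G)\le\mu(S_{n,k})$ with equality only for $G=S_{n,k}$, the contrapositive of part (a). For part (b) the identical scheme applies: only $C_{2k+2}$ is forbidden, the shortening lemma targets the single even length $2k+2$ (so the delicate parity issue is milder), and the extremal threshold graph becomes $S_{n,k}^+$.
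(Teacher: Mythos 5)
You have been asked to prove Conjecture~\ref{CONJ: conjecture 1}, which the paper itself does \emph{not} prove: it is Nikiforov's open conjecture, and the paper only establishes the weaker Theorem~\ref{:Main}, in which the exact lengths $\{2k+1,2k+2\}$ (resp.\ $2k+2$) are replaced by ``some cycle of length at least $2k+1$'' (resp.\ at least $2k+2$). So there is no proof in the paper to compare yours against, and your own diagnosis of why is accurate: once only two specific lengths are forbidden, $G$ may contain arbitrarily long cycles, so the paper's scheme --- bounding $e(N(u))$ via Facts~\ref{FACT: EX-P_{2k+2}}, \ref{FACT: EX-P_2k+3} and \ref{FACT: P_l} and feeding the bound into Lemma~\ref{LEM: Au-Matrix} --- is unavailable. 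The question is whether your proposal closes that gap, and it does not: your ``local cycle-shortening lemma'' \emph{is} the open content of the conjecture, and you leave it unproven, saying yourself that preventing the descent from jumping over both $2k+1$ and $2k+2$ ``absorbs most of the real work'' and would ``likely require'' a bipartite/non-bipartite dichotomy. Naming the obstruction is not the same as overcoming it, so the argument is a program, not a proof.

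Beyond the missing central lemma, two of your concrete steps fail. First, the final compression step is invalid: Lemma~\ref{LEM: Move-Neighbor} guarantees only $c(G_{u\rightarrow v})\le c(G)$, and its proof converts a cycle of length $\ell$ in the compressed graph into one of length $\ell$ \emph{or} $\ell+1$ in $G$ (see its Case 2). Thus for part (b) a $C_{2k+2}$ created by compression may correspond to a $C_{2k+3}$ in $G$, which is permitted, so compression does not preserve $C_{2k+2}$-freeness; for part (a) it preserves $C_{2k+1}$-freeness but can create a $C_{2k+2}$ from an allowed $C_{2k+3}$. Hence ``push $G$ toward a threshold graph without lowering $\mu$'' leaves the forbidden-cycle class, and the (also unproven) claim that $S_{n,k}$ uniquely maximises $\mu$ among threshold graphs avoiding the two lengths yields no bound on $\mu(G)$. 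Second, the weight-truncation step asserting a core $D$ of size close to $k$ with degrees $(1-o(1))n$ does not follow from $\mu\ge\sqrt{kn}(1+o(1))$ alone --- that structural statement requires the forbidden-subgraph hypothesis and substantial work, and deriving it is a major part of the later published resolutions of this conjecture. Finally, your parity remark for (b) is backwards: forbidding a single even length makes the descent \emph{harder}, not milder, since a shortening step of size $2$ can jump over the lone target $2k+2$, whereas two consecutive forbidden lengths catch every step of size $1$ or $2$.
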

A first step to attack Conjecture~\ref{CONJ: conjecture 1} was given by Yuan, Wang and Zhai in~\cite{SEP of C5&C6}, they proved that Conjecture~\ref{CONJ: conjecture 1} (a) holds when $k=2$. It seems that it is harder to attack Conjecture~\ref{CONJ: conjecture 1} (b) than to (a), and there is few result known as we have checked.


In this paper,
we prove the following  theorem, which is weaker than Conjecture~\ref{CONJ: conjecture 1}, but is stronger than Theorem~\ref{THM: P_2k+2} (Clearly, Theorem~\ref{THM: P_2k+2} is a corollary of Theorem~\ref{:Main} and we also give a better lower bound for $n$ in Theorem~\ref{:Main}).

\begin{thm}\label{:Main}
Let {$k\ge 2$ and $n\ge {13k^2}$} and let $G$ be a graph of order $n$.

(a) If $\mu(G)\geq \mu(S_{n,k})$, then $G$ contains a $C_{\geq 2k+1}$ unless $G=S_{n,k}$.

(b) If $\mu(G)\geq \mu(S_{n,k}^+)$, then $G$ contains a $C_{\geq 2k+2}$ unless $G=S_{n,k}^+$.
\end{thm}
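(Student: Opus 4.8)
The plan is to combine a lower bound on $\mu(S_{n,k})$, a Perron--eigenvector analysis that forces a dominating set of $k$ vertices, and the Kelmans-type transformation $G_{u\to v}$ introduced above as the main regularizing device. Throughout I work with an extremal counterexample: among all $C_{\ge 2k+1}$-free graphs on $n$ vertices I take $G$ with $\mu(G)$ as large as possible, so $\mu(G)\ge\mu(S_{n,k})$, and the goal is to prove $G=S_{n,k}$ (uniqueness then upgrades this to the full statement, since every admissible graph has $\mu\le\mu(G)=\mu(S_{n,k})$). Because adding an edge to a connected graph strictly increases the spectral radius, such a $G$ is edge-maximal $C_{\ge 2k+1}$-free; I would first use this saturation, together with the fact that a cut vertex or a nontrivial separation in a saturated graph admits an extra edge, to reduce to the $2$-connected case. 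For the eigenvalue input I compute $\mu(S_{n,k})$ from the equitable partition into the clique and the independent set, obtaining $\mu(S_{n,k})^2=(k-1)\mu(S_{n,k})+k(n-k)$ and hence $\mu(S_{n,k})>\sqrt{k(n-k)}$, so $\mu(G)^2>k(n-k)$.

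The next step is to locate the would-be clique. Let $\mathbf{x}$ be the Perron vector normalized by $\max_v x_v=x_z=1$. Writing the eigenequation at $z$ and once more, $\mu^2 x_z=\sum_{w\sim z}\sum_{y\sim w}x_y$, and feeding in $\mu^2>k(n-k)$, I would show that few vertices carry large Perron weight and that $z$ has degree close to $n$; iterating, I expect to extract a set $R$ of exactly $k$ vertices of degree close to $n-1$ whose Perron entries are bounded away from $0$, while every other vertex has entry $O(\sqrt{k/n})$. The no-long-cycle hypothesis enters here through the Erd\H{o}s--Gallai and Kopylov circumference bounds: a $2$-connected $C_{\ge 2k+1}$-free graph has at most $e(S_{n,k})=\binom{k}{2}+k(n-k)$ edges, with $S_{n,k}$ the dense extremal graph, so the edges outside $R$ must be scarce and $G-R$ is essentially independent.

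With this near-structure in hand I would invoke $G_{u\to v}$ to finish. The two properties I would establish and exploit are that $G_{u\to v}$ does not increase the circumference (so $C_{\ge 2k+1}$-freeness is preserved) and that, whenever $x_u\le x_v$, it does not decrease $\mu$; applying it repeatedly along edges oriented toward larger Perron weight drives $G$ to a stable graph with nested neighborhoods, i.e.\ a threshold graph that is still $C_{\ge 2k+1}$-free with $\mu\ge\mu(S_{n,k})$. A threshold $C_{\ge 2k+1}$-free graph is forced into the Kopylov family, so it remains only to compare spectral radii across the finitely many candidates $K_a\vee(K_{2k+1-2a}\cup\overline{K_{n-2k-1+a}})$; here the hypothesis $n\ge13k^2$ is exactly what makes the dominating-clique candidate $a=k$, namely $S_{n,k}$, strictly beat the competitors (such as $a=2$) and pins the maximizer down uniquely. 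Since every nontrivial Kelmans step with $x_u<x_v$ strictly increases $\mu$, the $\mu$-maximal $G$ is already stable, giving $G=S_{n,k}$. Part (b) runs identically with $2k+1$ replaced by $2k+2$ and one edge retained inside the independent set: since $\mu(G)\ge\mu(S_{n,k}^+)>\mu(S_{n,k})$ the reduction cannot collapse that edge, and the threshold comparison selects $S_{n,k}^+$.

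The main obstacle I anticipate is the circumference control under $G_{u\to v}$: proving that the transformation never creates a cycle of length $\ge2k+1$ (respectively $\ge2k+2$) requires a careful rerouting argument that converts any long cycle of $G_{u\to v}$ using a new edge $va$ with $a\in P_v(u)$ back into an at-least-as-long cycle of $G$ through $u$, and keeping this compatible with the Perron-weight orientation that guarantees $\mu$ does not drop is delicate. The second genuinely quantitative difficulty is the final spectral comparison of the Kopylov candidates, where the constant in $n\ge13k^2$ must be verified honestly rather than absorbed into an unspecified ``$n$ large''.
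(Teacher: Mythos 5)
Your outline breaks at the structural reduction that the rest of your argument depends on. You claim that a spectral-maximal (hence edge-maximal) $C_{\ge 2k+1}$-free graph can be assumed $2$-connected because ``a cut vertex \ldots admits an extra edge,'' and you then lean on this twice: to invoke the Kopylov edge bound, and to confine the final candidates to the family $K_a\vee(K_{2k+1-2a}\cup\overline{K_{n-2k-1+a}})$. But edge-maximal $C_{\ge 2k+1}$-free graphs with cut vertices exist and are precisely the dangerous competitors: take a dominating vertex $u$ joined to a disjoint union of cliques of order at most $2k-1$ (or, for part (b) with $k=2$, of copies of $S^+_{t,1}$). Adding any edge between two components of $G-u$ closes, through $u$, a cycle of length up to roughly $4k\ge 2k+1$, so the graph is saturated yet only $1$-connected, and Kopylov's theorem (which requires $2$-connectivity) is unavailable for it. These graphs are not a degenerate case to be waved off; the paper spends the larger half of its proof (Case 2 of both parts, with the modified polynomials $g(x)=x^2-kx-(k-\tfrac12)(n-k)$ and $x^2-(k-1)x-k(n-k)-\frac{x-\mu}{n-1-\mu}(h+2)$, plus Lemma~\ref{LEM: k=2} for the $k=2$ subcase built from $S^+_{t,1}$ components) beating exactly them. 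Your second load-bearing claim fails for the same family: iterating $G_{u\to v}$ does \emph{not} produce a threshold graph. The stable condition it yields (Lemma~\ref{LEM: Move-Neighbor} and its remark) is only that for each \emph{edge} $uv$, $P_v(u)=\emptyset$ or $P_u(v)=\emptyset$; a dominating vertex over two disjoint cliques satisfies this yet contains an induced $2K_2$, so it is not threshold, and the step ``threshold and $C_{\ge 2k+1}$-free $\Rightarrow$ Kopylov family,'' which was to finish your proof, has nothing to bite on. Two smaller leaks: a Kelmans step with $x_u=x_v$ need not strictly increase $\mu$, so the $\mu$-maximal $G$ need not itself be stable — the paper instead terminates the process with the strictly decreasing potential $f(G)=\prod_u d(u)$ and works with the terminal graph; and your Perron-vector extraction of $k$ vertices of degree near $n-1$ with all other entries $O(\sqrt{k/n})$ is asserted, not proved, and is exactly the kind of quantitative refinement such arguments usually stumble on.

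For contrast, the paper never needs global structure or candidate classification. Its engine is Lemma~\ref{LEM: Au-Matrix}: if every row sum of $B=A^2-(k-1)A-k(n-k)I$ is nonpositive then $\mu(G)\le\mu(S_{n,k})$, and the row sum at $u$ is the purely local quantity $2e(N(u))+e(N(u),Y_u)-(k-2)d(u)-k(n-k)$, which is bounded vertex by vertex using the path-extremal facts of Erd\H{o}s--Gallai and Balister--Gy\H{o}ri--Lehel--Schelp applied inside $G[N(u)]$, after the stabilization splits $N(u)$ into $S_u\cup T_u$ with $G[S_u]$ complete. If you want to rescue your route, the $2$-connectivity reduction must be replaced by a direct spectral comparison against the $1$-connected saturated graphs above — which is, in substance, reconstructing the paper's Case 2.
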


The rest of the paper is arranged as follows. In Section 2, we give some preliminary facts and lemmas which will be used in the proof of Theorem~\ref{:Main}.
In Section 3, the proof of Theorem~\ref{:Main} is given and in the last section, we discuss some open problems.


\section{Preliminaries}

First we recall some notation not defined in the above section, if $G$ and $H$ are two graphs, we write :
\begin{flushleft}
- $e(G)$ for $|E(G)|$;\\
- $c(G)$ for the circumference (the length of a longest cycle in a graph) of $G$;\\
- $x(G)$ for the eigenvector of $A(G)$ corresponding to the largest eigenvalue $\mu(G)$;\\
- $f(G)=\prod_{u\in V(G)} d(u)$;\\
- $H\subseteq G$ for $H$ being a subgraph of $G$;\\
- $G+H$ for the graph on vertex set $V(G)\cup V(H)$ and $E(G)\cup E(H)$;\\
- $G-X$ for the subgraph of $G$ induced by $V(G)\setminus X$, where $X\subseteq V(G)$.
\end{flushleft}

The following two facts were taken from~\cite{SEP of paths and cycles} and proved in~\cite{Without Path}.
\begin{fact}\label{FACT: EX-P_{2k+2}}
Let $k\geq 1$, $n>3k$ and $G$ be a connected graph of order $n$. If
	\begin{equation}\label{EQN: e1}
	e(G)\geq e(S_{n,k})=kn-(k^2+k)/2
	\end{equation}
then $G$ contains a $P_{2k+2}$, unless there is equality in~(\ref{EQN: e1}) and $G=S_{n,k}$.
\end{fact}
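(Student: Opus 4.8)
The plan is to read the statement as the Turán-type extremal theorem for long paths restricted to \emph{connected} graphs, specialized to the forbidden subgraph $P_{2k+2}$ and augmented with a uniqueness conclusion. A direct count gives $e(S_{n,k})=\binom{k}{2}+k(n-k)=kn-\binom{k+1}{2}$, and since a path in $S_{n,k}$ must alternate between the $k$ clique vertices and the independent set, its longest path has exactly $2k+1$ vertices; hence $S_{n,k}$ is itself $P_{2k+2}$-free. So the content to be proved is that a connected $P_{2k+2}$-free graph on $n>3k$ vertices has at most $kn-\binom{k+1}{2}$ edges, with $S_{n,k}$ the only graph attaining equality.

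First I would fix a longest path $P=v_0v_1\cdots v_p$ in $G$; as $G$ has no $P_{2k+2}$ we have $p\le 2k$, and by maximality $N(v_0)\cup N(v_p)\subseteq V(P)$. The workhorse is the crossing/rotation argument: if $v_0\sim v_{i+1}$ and $v_p\sim v_i$ for some $i$, then $v_0v_{i+1}v_{i+2}\cdots v_pv_iv_{i-1}\cdots v_1v_0$ is a cycle on all $p+1$ vertices of $P$; since $n>p+1$ and $G$ is connected, some vertex off $P$ attaches to this cycle and produces a path on $p+2$ vertices, contradicting the maximality of $P$. Thus the neighbourhoods of $v_0$ and $v_p$ on $P$ do not cross, which both caps $d(v_0)+d(v_p)$ and, after iterating the rotation to replace the endpoints, forces the dense part of $G$ to sit inside a small vertex set.

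Second, I would turn this structural control into the edge bound, most transparently by isolating the two classical extremal families and an induction on $n$. One family is $S_{n,k}$ itself, with $kn-\binom{k+1}{2}$ edges; the competing family is a clique $K_{2k}$ with the remaining $n-2k$ vertices hung as pendants at a single clique vertex, which is also $P_{2k+2}$-free and has $\binom{2k}{2}+(n-2k)=n+2k^2-3k$ edges. A one-line comparison,
\begin{equation*}
e(S_{n,k})-\bigl(n+2k^2-3k\bigr)=(k-1)\Bigl(n-\tfrac{5}{2}k\Bigr),
\end{equation*}
is positive precisely when $n>\tfrac52 k$, which is guaranteed by $n>3k$; this is exactly where the hypothesis on $n$ is used and why the $S_{n,k}$ branch wins. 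Peeling off the low-degree endpoint region supplied by the crossing lemma and applying the inductive hypothesis to the resulting smaller connected $P_{2k+2}$-free graph then yields $e(G)\le kn-\binom{k+1}{2}$.

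Finally, for uniqueness I would analyse the equality case $e(G)=kn-\binom{k+1}{2}$ directly: the non-crossing condition, pushed to its limit, forces the existence of a $k$-set $K$ with $G[K]=K_k$ such that $V\setminus K$ is independent and completely joined to $K$, i.e.\ $G=S_{n,k}$; any departure from this either manufactures a $P_{2k+2}$ or drops the edge count strictly below the bound. The main obstacle is this last step. Obtaining the bound itself is a fairly routine rotation-extension argument, but proving that equality pins down \emph{exactly} $S_{n,k}$ --- ruling out the clique-type and other near-extremal configurations and controlling the base cases of the induction --- under the tight hypothesis $n>3k$ is the delicate part of the proof.
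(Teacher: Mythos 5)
First, a point of reference: the paper does not prove this statement at all --- Fact~\ref{FACT: EX-P_{2k+2}} is quoted from Nikiforov's paper and proved in the cited work of Balister, Gy\H{o}ri, Lehel and Schelp (\emph{Connected graphs without long paths}), so your attempt can only be measured against that known proof, not against anything in this paper. Your framing is faithful to it: you correctly compute $e(S_{n,k})$, correctly observe that $S_{n,k}$ is $P_{2k+2}$-free, correctly identify the competing extremal family ($K_{2k}$ with $n-2k$ pendants at one clique vertex, with $n+2k^2-3k$ edges --- this is exactly the second term in the Balister--Gy\H{o}ri--Lehel--Schelp maximum), and your comparison $(k-1)\left(n-\tfrac52 k\right)>0$ is the right threshold computation showing why $n>3k$ puts $S_{n,k}$ ahead. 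The rotation--extension crossing lemma is also the right workhorse.

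However, the proposal has genuine gaps at precisely the decisive steps, so it is a plan rather than a proof. (1) The induction step is never actually formulated. ``Peeling off the low-degree endpoint region'' does not specify what is deleted, and the two natural obstructions are not addressed: deleting an endpoint or its neighbourhood can disconnect $G$ --- and the disconnected case is exactly where the clique-with-pendants family lives, which is why the published proof needs a careful block/cut-vertex case analysis rather than a one-step peel --- and the non-crossing condition by itself only yields $d(v_0)+d(v_p)\le p\le 2k$, which after summation gives roughly $e(G)\le kn$ without the $-(k^2+k)/2$ correction. Recovering the exact constant requires tracking how many edges are lost per deleted vertex and handling the base cases near $n=3k$, none of which is done. (2) The uniqueness clause is not proved; you say the non-crossing condition ``pushed to its limit'' forces $G=S_{n,k}$, and then concede this is ``the delicate part.'' But the uniqueness is not an optional refinement here: it is the part of the Fact that the present paper actually leans on (e.g., in Subcase 1.1 of the proof of Theorem~\ref{:Main}(a), equality in~(\ref{EQN: e4}) must force $G[N(u)]\cong S_{d(u),k-1}$). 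An attempt that establishes only the inequality, or only gestures at the equality analysis, does not prove the statement. A minor further point: for $k=1$ your comparison $(k-1)\left(n-\tfrac52 k\right)$ degenerates to $0$, so that case (connected $P_4$-free graphs with at least $n-1$ edges being exactly stars for $n>3$) needs a separate, if easy, argument.
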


\begin{fact}\label{FACT: EX-P_2k+3}
Let $k\geq 1$, $n>3k$ and $G$ be a connected graph of order $n$. If
	\begin{align}\label{EQN: e2}
	e(G)\geq e(S_{n,k}^{+})=kn-(k^2+k)/2+1
	\end{align}
	then $G$ contains a $P_{2k+3}$, unless there is equality in (\ref{EQN: e2}) and $G=S_{n,k}^{+}$.
\end{fact}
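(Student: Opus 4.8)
The plan is to run a spectral-extremal argument on the $\mu$-maximal counterexample and reduce the cycle problem to the two path-Facts already stated. I would treat both parts at once by writing $S=S_{n,k}$ and target circumference $t=2k$ in part~(a), and $S=S_{n,k}^+$ and $t=2k+1$ in part~(b); a short direct check gives $c(S_{n,k})=2k$ and $c(S_{n,k}^+)=2k+1$, so in either case $S$ itself contains no $C_{\ge t+1}$, which is why $S$ must appear as the exception. Suppose for contradiction that $G$ has order $n\ge 13k^2$, satisfies $\mu(G)\ge\mu(S)$, contains no $C_{\ge t+1}$, yet $G\ne S$; among all such counterexamples I would fix one with $\mu(G)$ largest, and let $x=x(G)$ be its Perron vector, normalised so that $\max_v x_v=x_z=1$ at a vertex $z$. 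First I would reduce to $G$ connected: if not, the component $H$ with $\mu(H)=\mu(G)\ge\mu(S_{n,k})>\mu(S_{|H|,k})$ (monotonicity of $\mu(S_{\cdot,k})$ in the order) yields a smaller counterexample, so induction on $n$ applies. I would then reduce to a $2$-connected block carrying $z$, using that the Perron weight concentrates on a single block and that a cut vertex can only shorten cycles.

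The combinatorial engine is a localised eigenvector estimate at $z$. Expanding $\mu^2=\mu^2 x_z=(A^2x)_z$ and using $x_w\le 1$ gives the clean bound $\mu^2\le\sum_{u\in N(z)}d(u)$, while the quadratic $\mu^2-(k-1)\mu-k(n-k)=0$ for $\mu(S_{n,k})$ forces $\mu^2\ge\mu(S_{n,k})^2>k(n-k)$. Hence $\sum_{u\in N(z)}d(u)>k(n-k)$, a near-$kn$ lower bound that makes the neighbourhood of $z$ extremely edge-rich. The threshold $n\ge 13k^2$ is what I expect to make the resulting edge count on the relevant connected subgraph exceed $e(S)$ comfortably.

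Next I would convert this abundance into a long cycle. Translating the degree-sum inequality into an edge lower bound of the form $e\ge e(S_{n,k})$ (respectively $e(S_{n,k}^+)$) on a connected subgraph of $G$, I would apply Fact~\ref{FACT: EX-P_{2k+2}} in part~(a) and Fact~\ref{FACT: EX-P_2k+3} in part~(b) to produce a $P_{2k+2}$ (respectively $P_{2k+3}$). In the $2$-connected setting the endpoints of such a path have at least two neighbours back on it, so a rotation/extension (Pósa-type) argument closes the path into a cycle of length $\ge t+1$, contradicting $C_{\ge t+1}$-freeness; the only way to avoid this is for the edge count to sit exactly at the extremal value, in which case the equality clauses of the two Facts force the structure to be $S$ itself.

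Finally I would pin down exact equality by symmetrisation. Using the transformation $G_{u\rightarrow v}$, which satisfies $\mu(G_{u\rightarrow v})\ge\mu(G)$ whenever $x_v\ge x_u$ (compare $x^{\top}A(G_{u\rightarrow v})x$ with $x^{\top}A(G)x$ over $P_v(u)$), I would move the private neighbours of low-weight vertices onto the high-weight core, driving every low-weight vertex to have neighbourhood exactly the core; the essential check is that each admissible step preserves $C_{\ge t+1}$-freeness, since otherwise it exhibits the forbidden cycle directly. This forces the core to be a clique $K_k$ joined to an independent remainder, and the circumference constraint then dictates the edges \emph{inside} that remainder: none in part~(a), giving $G=S_{n,k}$, and exactly one in part~(b), giving $G=S_{n,k}^+$, each contradicting $G\ne S$. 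I expect the main obstacle to be precisely this last rigidity step: fixing the core size at $k$ and, above all, excluding a \emph{single} stray edge among the low-weight vertices in part~(a) (respectively a second one in part~(b)), since such an edge perturbs $\mu$ only negligibly and must be ruled out by the delicate interplay between the Perron weights and the length of the cycle it would create.
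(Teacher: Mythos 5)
Your proposal does not prove the statement in question. Fact~\ref{FACT: EX-P_2k+3} is a purely combinatorial, edge-counting result: every \emph{connected} graph on $n>3k$ vertices with $e(G)\ge kn-(k^2+k)/2+1$ contains a $P_{2k+3}$, with $S_{n,k}^+$ the unique extremal graph. It has no spectral hypothesis, and its range $n>3k$ is far below the $n\ge 13k^2$ your argument needs. What you sketch is instead a proof of the paper's main result, Theorem~\ref{:Main} (the spectral statement about $C_{\ge 2k+1}$ and $C_{\ge 2k+2}$), via a $\mu$-maximal counterexample, a Perron-vector degree-sum estimate, and a P\'osa-type closure. Decisively, your third step invokes Fact~\ref{FACT: EX-P_2k+3} itself (``apply \dots\ Fact~\ref{FACT: EX-P_2k+3} in part (b) to produce a $P_{2k+3}$''), so read as a proof of that Fact the argument is circular: you assume exactly what you were asked to establish. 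No amount of tightening the spectral or rotation steps can repair this, because the conclusion you need (a path theorem under an edge-count hypothesis alone) simply cannot be extracted from a setup whose hypotheses ($\mu(G)\ge\mu(S_{n,k}^+)$, maximality of $\mu$, $n\ge 13k^2$) are strictly stronger and of a different kind.

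For comparison: the paper does not prove this Fact either; it is quoted from Nikiforov's paper and proved by Balister, Gy\H{o}ri, Lehel and Schelp in \emph{Connected graphs without long paths} (the reference \cite{Without Path}), so the correct ``proof'' here is a citation, or else a self-contained Erd\H{o}s--Gallai/Kopylov-style argument: typically one inducts on $n$, deleting a vertex of degree at most $k$ (handling the loss of connectivity, e.g.\ by contraction), and in the minimum-degree-$\ge k+1$ case extracts a long path directly from connectivity plus a cycle/path lemma, with the equality analysis isolating $S_{n,k}^+$. None of your spectral machinery (Perron weights, the $G_{u\rightarrow v}$ symmetrization of Lemma~\ref{LEM: Move-Neighbor}, the threshold $n\ge 13k^2$) bears on such a statement. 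As a side remark, even judged as an outline of Theorem~\ref{:Main} your route differs from the paper's actual method, which avoids P\'osa rotation entirely and instead bounds the column sums of the matrix $B=f(A)=A^2-(k-1)A-k(n-k)I$ vertex by vertex via Lemma~\ref{LEM: Au-Matrix}; your rotation and final rigidity steps are left unverified, and you yourself flag the stray-edge exclusion as the hard point without resolving it.
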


The following classical result was given in~\cite{EG59}.
{
\begin{fact}\label{FACT: P_l}
Let $\ell \geq 1$ and $G$ be  graph of order $n$. If
	\begin{equation*}\label{EQN: e-3}
	e(G)> (\ell-2)n/2,	
\end{equation*}
	then $G$ contains a $P_{\ell}$.
\end{fact}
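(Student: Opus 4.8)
The plan is to prove the statement directly by strong induction on $n$: any $G$ on $n$ vertices with $e(G) > (\ell-2)n/2$ contains a $P_\ell$. In the base range $n \le \ell - 1$ the hypothesis is vacuous, since $e(G) \le \binom{n}{2} = n(n-1)/2 \le (\ell-2)n/2$; no such $G$ exists. In particular, the edge hypothesis already forces $n \ge \ell$, a fact I will use at the very end. For the inductive step the strategy is to peel off the easy structural reductions first and isolate a single genuinely combinatorial core.

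The first reduction is to a connected graph. If $G$ is disconnected, then since the thresholds $(\ell-2)n_i/2$ over the components sum to $(\ell-2)n/2$, at least one component $H$ must itself satisfy $e(H) > (\ell-2)|V(H)|/2$; as $|V(H)| < n$, induction applied to $H$ produces a $P_\ell$. The second reduction handles a low-degree vertex: if some $v$ has $d(v) \le (\ell-2)/2$, then $e(G-v) = e(G) - d(v) > (\ell-2)n/2 - (\ell-2)/2 = (\ell-2)(n-1)/2$, so induction applied to the $(n-1)$-vertex subgraph $G-v$ again yields a $P_\ell$. Both steps are routine bookkeeping.

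After these reductions the only surviving case is $G$ connected with every degree exceeding $(\ell-2)/2$, i.e.\ minimum degree $\delta$ satisfying $2\delta \ge \ell - 1$. Here I would invoke the classical fact that a connected graph of minimum degree $\delta$ contains a path with at least $\min\{2\delta,\,n-1\}$ edges. Since $2\delta \ge \ell - 1$ and $n - 1 \ge \ell - 1$ (using $n \ge \ell$), this gives a path with at least $\ell - 1$ edges, that is, a $P_\ell$, completing the induction.

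The hard part is proving that minimum-degree lemma, and this is where the real work lives. I would take a longest path $P = v_0 v_1 \cdots v_p$ in the connected graph; maximality forces $N(v_0)$ and $N(v_p)$ to lie entirely on $P$. If $p \ge 2\delta$ we are done, so assume $p < 2\delta$; then $d(v_0) + d(v_p) \ge 2\delta > p$, and a pigeonhole count on the neighbor-indices of $v_0$ and $v_p$ inside $\{1,\dots,p\}$ produces a crossing pair of edges $v_0 v_i$ and $v_{i-1} v_p$. Rerouting along this crossing turns $P$ into a cycle through all $p+1$ vertices of $P$; if that cycle were not all of $G$, connectivity would supply a vertex off the cycle adjacent to it, yielding a path strictly longer than $P$, a contradiction. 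Hence the cycle spans $G$, so $G$ has a Hamilton path on $n$ vertices, giving $n-1 \ge \min\{2\delta,\,n-1\}$ edges. The delicate points I expect to watch are the parity of $(\ell-2)/2$ when converting the degree threshold into $2\delta \ge \ell - 1$, and making the pigeonhole crossing precise so that the two required edges genuinely exist and the reroute is a simple cycle.
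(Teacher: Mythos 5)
Your proof is correct, but note that the paper does not prove this statement at all: it is quoted as a classical fact with a citation to Erd\H{o}s--Gallai \cite{EG59}, so there is no in-paper argument to compare against. What you have written is a complete, self-contained proof and is essentially the standard modern proof of the Erd\H{o}s--Gallai path theorem: strong induction on $n$, splitting off a component if $G$ is disconnected, deleting a vertex of degree at most $(\ell-2)/2$ otherwise, and in the remaining case (connected, $2\delta\ge \ell-1$, where the integrality step $2d(v)>\ell-2\Rightarrow 2d(v)\ge \ell-1$ is exactly right) invoking the longest-path rotation argument: all neighbors of the endpoints lie on the path, the pigeonhole count $d(v_0)+d(v_p)\ge 2\delta>p$ on the index sets $\{i: v_0v_i\in E\}$ and $\{i: v_{i-1}v_p\in E\}$ forces a crossing pair, the reroute $v_0v_1\cdots v_{i-1}v_pv_{p-1}\cdots v_iv_0$ is a simple cycle through all of $V(P)$ (including the degenerate cases $i=1$ and $i=p$), and connectivity plus maximality of $P$ forces this cycle to span $G$, yielding a Hamilton path with $n-1\ge \ell-1$ edges. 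All the delicate points you flagged check out, so your proposal upgrades the paper's black-box citation to a full proof at the cost of about a page; the paper's choice to cite buys brevity, since the fact is classical and only used as an edge-counting tool in the main argument.
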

}

The following fact can be checked directly from the structure of $S_{n,k}$ and $S_{n,k}^+$.
\begin{fact}\label{FACT: P_2k}
Let $n\ge 2k$. If a graph $G$ contains a subgraph $S_{n,k}$ (or $S_{n,k}^+$), then $G$ has a path $P_{\ge 2k-1}$ (or $P_{\ge 2k}$) with two ends in the class $V(K_k)$.
\end{fact}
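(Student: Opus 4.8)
The plan is to exhibit the required paths explicitly, exploiting the bipartite-like structure of $S_{n,k}$. Write $A=V(K_k)=\{a_1,\dots,a_k\}$ for the clique part and $B=\{b_1,\dots,b_{n-k}\}$ for the independent part; since $S_{n,k}$ is a spanning subgraph of $G$ (both have $n$ vertices), any path I build inside $S_{n,k}$ is automatically a path in $G$, and the requirement that the ends lie in $V(K_k)$ is meaningful. Because every vertex of $A$ is joined to every vertex of $B$, and $A$ itself is a clique, I can alternate freely between the two sides and, when convenient, take consecutive vertices inside $A$.

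For part (a), I would take the alternating path
\[
a_1,\, b_1,\, a_2,\, b_2,\,\dots,\, a_{k-1},\, b_{k-1},\, a_k,
\]
which uses all $k$ vertices of $A$ and $k-1$ vertices of $B$. Each consecutive pair is of the form $a_ib_j$ or $b_ja_{i+1}$, hence an edge of the join, so this is a genuine path; it has order $2k-1$ and its two ends $a_1,a_k$ lie in $A$. The only thing to verify is that $B$ is large enough, i.e. $n-k\ge k-1$, which follows from $n\ge 2k$.

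For part (b), the single extra edge inside the independent set lets me gain one additional vertex. Suppose the added edge is $b_1b_2$. I would insert this edge into the previous alternating scheme, placing $b_1b_2$ consecutively between the first two clique vertices:
\[
a_1,\, b_1,\, b_2,\, a_2,\, b_3,\, a_3,\,\dots,\, b_k,\, a_k.
\]
Here $b_1b_2$ is the newly added edge of $S_{n,k}^+$, every $a_ib_j$ used is an edge of the join, the path has order $2k$, and its ends $a_1,a_k$ are again in $A$. This consumes $k$ vertices of $B$, so I only need $n-k\ge k$, which is exactly the hypothesis $n\ge 2k$.

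Since both constructions are direct, there is no real obstacle; the only points needing care are confirming adjacency along each path from the clique/join/extra-edge structure and checking that the independent set supplies enough interior vertices, and both of these reduce to the single inequality $n\ge 2k$.
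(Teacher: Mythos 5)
Your proof is correct: the paper offers no argument for this fact beyond the remark that it "can be checked directly from the structure of $S_{n,k}$ and $S_{n,k}^+$", and your explicit alternating paths $a_1b_1a_2\cdots b_{k-1}a_k$ and $a_1b_1b_2a_2b_3\cdots b_ka_k$ are precisely that direct check, with the vertex counts $n-k\ge k-1$ and $n-k\ge k$ correctly reduced to the hypothesis $n\ge 2k$.
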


In the following we give { four} technical lemmas which will be used in the proof of Theorem~\ref{:Main}.

%

\begin{lem}\label{LEM: P_2k+1}
Let $k\geq 1$, $t\ge 2$. If $G$ is a graph with a partition $A\cup B$ of $V(G)$ such that $G[A]\cong K_{t}$, $e(A, B) \ge k|B|$ and $B$ has at least one vertex $b$ with $|N(b)\cap A|>k$, and $|B|>kt$, then $G$ has a path $P_{\ge 2k+1}$ with both ends in $A$ and $|V(P_{\ge 2k+1})\cap B|\ge k$.
	
\end{lem}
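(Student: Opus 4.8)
The plan is to find an \emph{alternating} path $a_0 b_1 a_1 b_2 \cdots b_k a_k$ with $a_0,\dots,a_k\in A$ distinct, $b_1,\dots,b_k\in B$ distinct, and each $b_i$ adjacent to both $a_{i-1}$ and $a_i$; such a path has $2k+1$ vertices, both ends in $A$, and exactly $k$ vertices in $B$, which is precisely the desired $P_{\ge 2k+1}$. First note the hypothesis gives a vertex $b^\ast\in B$ with $|N(b^\ast)\cap A|\ge k+1$, so in particular $t=|A|\ge k+1$; hence whenever a partial alternating path uses at most $k$ vertices of $A$ there is always a vertex of $A$ outside it.

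The engine of the construction is a single extension move that exploits $G[A]\cong K_t$. Suppose we have built $P=a_0b_1\cdots b_j a_j$ with $j<k$, and let $U_A,U_B$ be its vertices in $A,B$, so $|U_A|=j+1\le k<t$. If some $b\in B\setminus U_B$ has two neighbours $x,y\in A\setminus U_A$, then since $a_jx$ is an edge of $K_t$ we may append $a_j,x,b,y$ to get a longer alternating path $a_0\cdots a_j x b y$ ending in $A$; the current endpoint $a_j$ plays no role beyond lying in $A$, which is exactly the use of completeness. I therefore apply this move repeatedly and take a resulting alternating path $P$ that cannot be so extended, with $j$ vertices in $B$. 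If $j\ge k$ we are done by truncation, so assume $j\le k-1$. Being stuck means every $b\in B\setminus U_B$ has at most one neighbour in $A\setminus U_A$, hence $d_A(b)\le |U_A|+1=j+2$. Counting the $A$--$B$ edges and using $d_A(b)\le t$ for the $j$ vertices of $U_B$ gives $k|B|\le e(A,B)\le jt+(|B|-j)(j+2)\le jt+|B|(j+2)$, i.e.\ $(k-j-2)|B|\le jt$. For $j\le k-3$ the left side is at least $|B|>kt>jt$, a contradiction; thus $j\in\{k-2,k-1\}$.

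It remains to push $j$ from $k-2$ or $k-1$ up to $k$, and here the high-degree vertex $b^\ast$ enters. Suppose first $b^\ast\notin U_B$. When $j=k-2$, $b^\ast$ has at least $(k+1)-|U_A|=(k+1)-(k-1)=2$ neighbours in $A\setminus U_A$, so the extension move applies to $b^\ast$, contradicting stuckness. When $j=k-1$, so $|U_A|=k$: if $b^\ast$ has two neighbours outside $U_A$ the move again applies; otherwise $b^\ast$ has exactly one neighbour $y\notin U_A$ (it cannot have none, as $d_A(b^\ast)\ge k+1>|U_A|$), whence its remaining $\ge k$ neighbours fill all of $U_A$, so $b^\ast$ is adjacent to the endpoint $a_j$ and $a_j b^\ast y$ extends $P$. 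Either way a stuck path with $j\le k-1$ cannot avoid using $b^\ast$. The main obstacle, and the only remaining case, is therefore that $b^\ast$ lies on \emph{every} maximal alternating path. To handle it I would swap $b^\ast$ out: if $b^\ast=b_p$ has path-neighbours $a_{p-1},a_p$ and some other $b'\in B\setminus U_B$ is adjacent to both $a_{p-1}$ and $a_p$, then replacing $b_p$ by $b'$ yields a maximal path of the same length that frees $b^\ast$, reducing to the previous case. The crux is thus to guarantee such a common neighbour (or, more robustly, to run a rotation argument over the $A$-endpoints, once more powered by the completeness of $A$), which I expect to follow from the abundance hypotheses $e(A,B)\ge k|B|$ and $|B|>kt$ through a count of common neighbours of the relevant pair; carrying out this boundary bookkeeping is the most delicate part of the argument.
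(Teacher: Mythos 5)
Your construction has a structural accounting error that undermines both the stuck-path count and the endgame. The extension move appends $x,b,y$ after the endpoint $a_j$, so each application adds one vertex of $B$ but \emph{two} vertices of $A$; after the first move the path is no longer of the form $a_0b_1a_1\cdots b_ja_j$, and the invariant $|U_A|=j+1$ on which everything downstream rests is false --- in general $|U_A|$ can be as large as $2j+1$. Redoing your count honestly, stuckness only gives $d_A(b)\le |U_A|+1\le 2j+2$ for $b\in B\setminus U_B$, whence $k|B|\le jt+(|B|-j)(2j+2)$, i.e.\ $(k-2j-2)|B|\le jt$; this rules out only $j\lesssim (k-3)/2$, leaving stuck values of $j$ roughly in $[(k-1)/2,\,k-1]$ rather than $\{k-2,k-1\}$. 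The $b^\ast$ boost adds a single $B$-vertex, so it cannot bridge that range; moreover your $j=k-1$ subcase explicitly uses $|U_A|=k$ to force $b^\ast$'s remaining neighbours to ``fill all of $U_A$'' (hence hit $a_j$), which also fails once $|U_A|$ can be up to $2k-1$. (Keeping strict alternation would require only appending when $b$ is adjacent to the current endpoint, but then ``stuck'' no longer yields the degree bound you invoke.) Separately, you concede the last case outright: when $b^\ast$ lies on every maximal path, the swap needs some $b'\in B\setminus U_B$ adjacent to both $a_{p-1}$ and $a_p$, and stuckness constrains only edges from $B\setminus U_B$ into $A\setminus U_A$, so nothing you have established produces such a $b'$; ``I expect it to follow'' is not a proof, and this is exactly where the hypotheses $e(A,B)\ge k|B|$, $|B|>kt$ would have to be spent.

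For comparison, the paper avoids the stuck-path analysis entirely by doing the counting \emph{once, globally, up front}: if fewer than $k$ vertices of $B$ had at least $k$ neighbours in $A$, then with $S$ the set of such vertices, $k|B|\le e(A,B)\le |S|t+(|B|-|S|)(k-1)<kt+(k-1)|B|<k|B|$ by $|B|>kt$, a contradiction; so one may fix $b_1,\dots,b_k$ with $|N(b_i)\cap A|\ge k$ and $|N(b_1)\cap A|>k$. It then builds the path by induction on $k$: delete $b_k$ and one neighbour $a_k\in A$, get a path $P_{\ell'}$, $\ell'\ge 2k-1$, with ends $a_0,a_{k-1}\in A$ containing $b_1,\dots,b_{k-1}$, and splice $b_k$ back in via $a_{k-1}a_k$, via $a_{k-1}b_ka_k$, or via a fresh vertex $a_k'\in N(b_k)\cap A$ off the path using completeness of $A$ for the edge $a_{k-1}a_k'$ --- the degree bound $|N(b_k)\cap A|\ge k$ guarantees $a_k'$ exists when $\ell'=2k-1$. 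Your greedy scheme could likely be repaired by adopting this order of operations --- extract the $k$ high-degree vertices of $B$ first, then route the path --- but as written the argument does not go through.
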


\begin{proof}
\begin{claim}\label{CLAIM: c1}
There exist $k$ vertices $b_{1}, b_{2}, \ldots, b_{k}$ in $B$ such that $ |N(b_{1})\cap A|>k$ and $|N(b_{i})\cap A| \geq k$ for $i=2,\ldots, k$.
\end{claim}
Let $S=\{ x\ |\ x\in B \mbox{ and } |N(x)\cap A| \geq k \}.$ Clearly, $b\in S$ and $|N(b)\cap A|>k$. So we can choose $b_1=b$ and it is sufficient to show that $|S|\ge k$. If not then we have
\begin{align*}
k|B|\le e(A, B) \leq |S||A|+(|B|-|S|)(k-1) <kt+|B|(k-1) <k|B|,
\end{align*}
the last inequality holds because $|B|> tk$, the contradiction implies the claim.

\begin{claim}\label{CLAIM: c2}
$G$ contains a path $P_{\geq 2k+1}$ with both ends in $A$ and $\{b_1, \ldots, b_{k}\}\subset V(P_{\ell})$.
\end{claim}
By induction on $k$. For $k=1$, the claim is clearly true. Now suppose $k>1$ and the statement holds for $k-1$.
Choose $a_k$ from $A$ such that $a_kb_k\in E(G)$ and let $A'=A\setminus\{a_k\}$ and $B'=B\setminus\{b_k\}$. Let $G'$ be the subgraph of $G$ induced by $A'\cup B'$. Thus $G'[A']\cong K_{t-1}$,
  $|N_{G'}(b_i)\cap A'|\ge |N_{G}(b_i)\cap A|-1\ge k-1$ for $i=2,\ldots, k-1$, and $|N_{G'}(b_1)\cap A'|\ge |N_{G}(b_1)\cap A|-1>k-1$. By induction hypothesis, $G'$ contains a path $P_{\ell'}$, $\ell'\ge 2k-1$, with two ends, say $a_0$ and $a_{k-1}$, in $A'$ and $\{b_1, \ldots, b_{k-1}\}\subseteq B'$. If $\ell'\ge 2k$ then $P_{\ell'}+a_{k-1}a_k$ is the desired path. Hence assume $\ell'= 2k-1$ and so $|V(P_{\ell'})\cap A'|\le k$. If one of $a_0, a_{k-1}$ belongs to $N_G(b_k)$, say $a_{k-1}b_k\in E(G)$,  then $P_{\ell'}+a_{k-1}b_ka_k$ is a path with length $\ell'+2\ge 2k+1$, as claimed. So, assume none of $a_0, a_{k-1}$ belongs to $N_G(b_k)$. Since $|N_G(b_k)\cap A|\ge k$, there is at least one vertex $a_k'\in N_G(b_k)\cap A$ such that $a_k' \notin V(P_{\ell'})\cap A$. Therefore, $P_{\ell'}+a_{k-1}a_k'b_ka_k$ is a path with length $\ell'+3>2k+1$, as claimed.
This completes the proof of the lemma.
\end{proof}

\noindent{\bf Remark.} In particular, the result is still true if we replace the condition ``$e(A, B) \ge k|B|$ and $B$ has at least one vertex $b$ with $|N(b)\cap A|>k$'' by ``$e(A, B)>k|B|$'' in Lemma~\ref{LEM: P_2k+1}.

\begin{lem}\label{LEM: Move-Neighbor}
Let $G$ be a graph of order $n$ and $uv\in E(G)$. If $P_v(u)\ne \varnothing$ and $P_u(v)\ne \varnothing$
then either $G'=G_{u\rightarrow v}$ or $G'=G_{v\rightarrow u}$ has the property that $c(G')\le c(G)$, $\mu(G')\ge  \mu(G)$, and $f(G')< f(G)$.
\end{lem}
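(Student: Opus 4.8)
The plan is to analyze the effect of the single operation on each of the three quantities separately, and to show that both the degree product $f$ and the circumference behave well for \emph{both} directions, so that the only genuine choice is forced by the eigenvector. Throughout write $K=N(u)\cap N(v)$, $m=|K|$, $p=|P_v(u)|$ and $q=|P_u(v)|$, so that $d(u)=m+p+1$ and $d(v)=m+q+1$, with $p,q\ge 1$ by hypothesis. For the degree product, note that in passing from $G$ to $G_{u\rightarrow v}$ only the degrees of $u$ and $v$ change: every $w\in P_v(u)$ trades its neighbour $u$ for $v$ and so keeps its degree, while all other vertices are untouched. Thus $d(u)$ drops to $m+1$ and $d(v)$ rises to $m+p+q+1$, whence $f(G_{u\rightarrow v})/f(G)=\frac{(m+1)(m+p+q+1)}{(m+p+1)(m+q+1)}$. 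Since $(m+1)(m+p+q+1)=(m+p+1)(m+q+1)-pq$ and $p,q\ge 1$, this ratio is strictly less than $1$; the identical computation gives the same ratio for $G_{v\rightarrow u}$. Hence $f$ strictly decreases in both directions, and this is exactly where the two nonempty-private-set hypotheses enter.

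For the spectral radius, let $x=x(G)\ge 0$ be a Perron eigenvector of $A(G)$ and assume without loss of generality that $x_v\ge x_u$ (otherwise swap the names of $u$ and $v$). Taking $G'=G_{u\rightarrow v}$, a Rayleigh-quotient estimate gives $\mu(G')\ge x^{\top}A(G')x/x^{\top}x=\mu(G)+\frac{2(x_v-x_u)\sum_{w\in P_v(u)}x_w}{x^{\top}x}\ge\mu(G)$, using $x\ge 0$. So the move \emph{towards} the endpoint of larger eigenvector weight never decreases $\mu$, and I will select $G'$ to be this move.

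The crux is that neither move increases the circumference. I would isolate this as an auxiliary claim: if $H$ has adjacent vertices $u,v$ with $N_H(u)\setminus\{v\}\subseteq N_H(v)$ (so $v$ dominates $u$), and $G^{\star}$ is obtained from $H$ by moving some $S\subseteq N_H(v)\setminus(N_H(u)\cup\{u\})$ from $v$ to $u$, then $c(H)\le c(G^{\star})$. This applies to our situation because $G_{u\rightarrow v}$ satisfies $N(u)\setminus\{v\}=K\subseteq N(v)$, and $G$ is recovered from $H=G_{u\rightarrow v}$ by moving exactly $S=P_v(u)$ back from $v$ to $u$; the claim then yields $c(G_{u\rightarrow v})\le c(G)$, and symmetrically $c(G_{v\rightarrow u})\le c(G)$. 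To prove the claim, take a longest cycle $C$ of $H$; the only edges of $H$ absent from $G^{\star}$ are the $\{v,w\}$ with $w\in S$, all incident to $v$, so $C$ uses at most two of them. If $C$ uses none, then $C\subseteq G^{\star}$. Otherwise I reroute $C$ into a cycle of $G^{\star}$ of at least the same length, splitting into cases according to whether $u\in V(C)$ and whether $u,v$ are consecutive on $C$: when $u\notin V(C)$ the freed vertex $u$ absorbs the one or two moved edges; when $u$ and $v$ are consecutive on $C$, the other cycle-neighbour $x$ of $u$ lies in $N_H(v)$ by domination, so $\{v,x\}\in E(G^{\star})$ and I may transpose $u$ and $v$ along $C$; and in the remaining case (with $u,v$ on $C$ but nonadjacent and a moved edge $\{v,s\}$ used) I reverse the arc joining $u$ to $s$ and reconnect using the new edge $\{u,s\}$ together with $\{v,x\}$, the latter again available because $u$'s cycle-neighbour $x$ lies in $N_H(v)$. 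Each case produces a cycle on the same vertex set as $C$, giving $c(H)\le c(G^{\star})$.

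Combining the three steps, the chosen $G'=G_{u\rightarrow v}$ (the move toward larger eigenvector weight) simultaneously satisfies $\mu(G')\ge\mu(G)$, $f(G')<f(G)$ and $c(G')\le c(G)$. The main difficulty is the last case of the circumference claim; the dominance $N_H(u)\setminus\{v\}\subseteq N_H(v)$ is precisely what makes the arc-reversal reconnection legal, and the step that most needs care is checking the degenerate overlaps (for instance when the cycle-neighbours of $u$ and of $v$ coincide), where the reroute must be verified to stay on the same vertex set.
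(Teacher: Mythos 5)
Your proposal follows essentially the same route as the paper's proof: the same Rayleigh-quotient selection of direction (move the private neighbours toward the endpoint of larger Perron weight), the same two-vertex degree computation for $f$ (the paper writes the identity $(m+p+1)(m+q+1)-(m+1)(m+p+q+1)=pq$ multiplicatively as $f(G')/f(G)<1$), and the same orientation of the circumference argument --- taking a cycle of $G_{u\rightarrow v}$ and rerouting it into a cycle of $G$ of at least the same length. Your domination claim is the paper's case analysis repackaged as a standalone statement, with the cases organized by the position of $u$ relative to $C$ rather than, as in the paper, by whether $v$'s second cycle-neighbour lies in $P_u(v)$; the transposition and arc-reversal moves are identical to the paper's two cases.

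One subcase of your claim is not covered as written: when $u\in V(C)$, $u$ and $v$ are nonadjacent on $C$, and \emph{both} cycle-neighbours $s,s'$ of $v$ lie in $S$, the arc-reversal you describe repairs only the deleted edge $vs$, and the resulting cycle still traverses the deleted edge $vs'$. This is fixed by the transposition you already use in the consecutive case: both cycle-neighbours of $u$ lie in $N_H(u)\setminus\{v\}\subseteq N_H(v)$ and are not in $S$ (since $S\cap N_H(u)=\varnothing$), so exchanging the positions of $u$ and $v$ along $C$ --- using the new edges $us$, $us'$ and the surviving edges from $v$ to $u$'s old cycle-neighbours --- yields a cycle of the same length in $G^{\star}$. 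As for the degenerate overlaps you flag at the end, note that $s\in S$ forces $su\notin E(H)$, so a vertex of $S$ is never a cycle-neighbour of $u$ on $C$; this observation disposes of the coincidence worries, and with the missing subcase added your proof is complete.
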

\begin{proof}
Let $x=x(G)$.
Without loss of generality, assume $x_u\leq x_v$.
We show that $G'=G_{u\rightarrow v}$ is the desired graph.
	
 Since $x_v\ge x_u>0$, we have
 \begin{align*}
\mu (G') \geq \frac{x^{T}A(G')x}{x^{T}x}\geq\frac{x^{T}A(G)x}{x^{T}x}=\mu(G).
 \end{align*}
 Since $d_{G}(u)=d_{G'}(u)+|P_v(u)|$, $d_{G}(v)=d_{G'}(u)+|P_u(v)|$, and $d_{G'}(v)=d_{G'}(u)+|P_v(u)|+|P_u(v)|$, we have
 \begin{align*}
\frac{f(G')}{f(G)}=\frac{d_{G'}(u)d_{G'}(v)}{d_{G}(u)d_{G}(v)}< 1,
\end{align*}
that is $f(G')< f(G)$.

 Let $C'= c_{1}c_{2}\ldots c_{\ell}c_1$  be a cycle of length $\ell$ in $G'$.  We show that $G$ has a cycle $C$ of length at least $\ell$. If $v \notin V(C')$ then $C=C'$ also is a cycle in $G$ and we are done.  Now suppose that $v \in V(C')$ and let $a$ and $b$ be the two neighbors of $v$ on $C'$. If $a \notin P_v(u)$ and $b \notin P_v(u)$ then $C=C'$ also is a cycle in $G$ and we are done.
Hence, without loss of generality, assume  $a \in P_v(u)$.
\begin{case}
$b\notin P_u(v)$.
\end{case}
Hence $b\in N_G(u)$. If $u\notin V(C')$, set $C=(C'-\{va, vb\})\cup\{ua, ub\}$, then $C$ is a cycle of length $\ell$ in $G$, as claimed. Hence assume $u\in V(C')$. Let $c$ and $d$ be the two neighbors of $u$ on $C'$. Note that, for all $x\in N_{G'}(u)$, $x\in N_G(v)$. Set $C=(C'-\{va,vb, uc, ud\})\cup\{ua, ub, vc, vd\}$. Thus $C$ is a cycle of length $\ell$ in $G$, as claimed.

\begin{case}
	$b\in P_u(v)$.
\end{case}
If	$u\notin V(C')$, set $C=(C-\{av\})\cup\{au, uv\}$, then $C$ is a cycle of length $\ell+1$ in $G$, as claimed. Hence assume $u\in V(C')$ and let $c$ be the neighbor of $u$ such that $a,v,u,c$ are arranged in clockwise  on $C'$.
Set $C=(C'-\{av, uc\})\cup\{au, vc\}$. Then $C$ is a cycle of length $\ell$ in $G$, as claimed.


The proof is completed.
\end{proof}

\noindent{\bf Remark:} Since $f(G)$ is limited, after finite steps, the moving neighbor operations in Lemma~\ref{LEM: Move-Neighbor} will stop at a graph $H$ with the property that $\mu(H)\ge \mu(G)$, $c(H)\le c(G)$ and, for any edge $uv\in E(H)$, $P_v(u)=\emptyset$ or $P_u(v)=\emptyset$.

\begin{lem}\label{LEM: Au-Matrix}
Given two positive integers $a, b$ and a nonnegative symmetric irreducible matrix $A$ of order $n$, let $\mu$ be the largest eigenvalue of $A$ and let $\mu'$ be the  largest root of the polynomial $f(x)=x^2-ax-b$.
Define  $B=f(A)$ and let $B_j=\sum_{i=1}^nB_{ij}$ for $j=1,2,\ldots, n$. If $B_j\leq0$ for all $j=1,2,\ldots,n$, then $\mu\leq\mu'$ with equality holds if and only if $B_j=0$ for all $j=1,2,\ldots, n$.
\end{lem}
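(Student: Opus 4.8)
The plan is to exploit Perron--Frobenius theory for the nonnegative symmetric irreducible matrix $A$ together with the sign condition on the column sums of $B=f(A)=A^{2}-aA-bI$. Since $A$ is symmetric its eigenvalues are real, and since $A$ is nonnegative and irreducible, Perron--Frobenius guarantees that the largest eigenvalue $\mu$ is attained with a \emph{strictly positive} eigenvector $z$, that is $Az=\mu z$ with $z_i>0$ for all $i$. The whole argument hinges on pairing the all-ones vector $\mathbf{1}$ (which encodes the column sums through $\mathbf{1}^{T}B$) against this positive eigenvector $z$.

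First I would evaluate the scalar $\mathbf{1}^{T}Bz$ in two ways. On the one hand, because $B=f(A)$ is a polynomial in $A$, the vector $z$ is also an eigenvector of $B$ with eigenvalue $f(\mu)$; hence
\begin{align*}
\mathbf{1}^{T}Bz=f(\mu)\,\mathbf{1}^{T}z=f(\mu)\sum_{i=1}^{n}z_i.
\end{align*}
On the other hand, expanding by columns gives
\begin{align*}
\mathbf{1}^{T}Bz=\sum_{j=1}^{n}\Big(\sum_{i=1}^{n}B_{ij}\Big)z_j=\sum_{j=1}^{n}B_j z_j\le 0,
\end{align*}
where the inequality uses $B_j\le 0$ and $z_j>0$ for every $j$. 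Since $\sum_i z_i>0$, combining the two expressions yields $f(\mu)\le 0$.

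Next I would translate $f(\mu)\le 0$ into the stated bound. As $a,b$ are positive, $f(x)=x^{2}-ax-b$ is an upward parabola whose larger root is $\mu'=\tfrac12(a+\sqrt{a^{2}+4b})$; the condition $f(\mu)\le 0$ places the real number $\mu$ between the two roots, so in particular $\mu\le\mu'$. For the equality case, note that $\mu=\mu'$ holds exactly when $f(\mu)=0$, which by the first displayed identity is equivalent to $\sum_j B_j z_j=0$. Since every summand $B_j z_j$ is nonpositive, the sum vanishes if and only if each term vanishes, and because $z_j>0$ this forces $B_j=0$ for all $j$; conversely, if all $B_j=0$ then $f(\mu)\sum_i z_i=0$ gives $f(\mu)=0$ and hence $\mu=\mu'$.

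I do not anticipate a serious obstacle; the only delicate point is ensuring the eigenvector used is \emph{strictly} positive, which is precisely where irreducibility of $A$ enters (a merely nonnegative Perron vector would not permit concluding $B_j=0$ from $B_j z_j=0$). Everything else is a direct two-way evaluation of a single bilinear form.
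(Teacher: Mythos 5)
Your proposal is correct and follows essentially the same argument as the paper: both evaluate the bilinear form $\mathbf{1}^{T}B\,z$ (with $z$ the positive Perron eigenvector) in two ways to get $f(\mu)\le 0$, then read off the bound and the equality case from the sign pattern $B_j\le 0$, $z_j>0$. Your explicit remark that irreducibility is what guarantees strict positivity of $z$ (needed to deduce $B_j=0$ from $B_jz_j=0$) is a point the paper uses only implicitly.
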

\begin{proof}
Let $\textbf{x}=(x_1,x_2,\dots,x_n)$ be a positive eigenvector of $A$ corresponding to $\mu$ with $\sum_{i=1}^n x_i=1$, and let $\textbf{1}$ be the vector of dimension $n$ with all entries 1.
On one hand,
$$\textbf{1}(B\textbf{x}^T)=\textbf{1}(\mu^2-a\mu-b)\textbf{x}^T=\mu^2-a\mu-b.$$
On the other hand,
$$(\textbf{1}B)\textbf{x}^T=(B_1,B_2,\dots,B_n)\textbf{x}^T=\sum\limits_{i=1}^n B_ix_i\leq0.$$
Hence, we have $\mu^2-a\mu-b\leq0$, which implies that $\mu\leq\mu'$ and the equality holds  {if and} only if  $B_j=0$ for all $j=1,2,\ldots, n$.
\end{proof}

\begin{lem}\label{LEM: k=2}
Given integer $m(\ge 1)$, let $H$ be a graph with components $A_1,\cdots, A_{m-1}$ and let $G=(H+S^+_{t_m,1})\wedge K_1$ and $G'=(H+S^+_{t_m', 1}+S^+_{t_{m+1}', 1})\wedge K_1$. If $t_m'+t_{m+1}'=t_m$ then {$\mu(G)>\mu(G')$.}

\end{lem}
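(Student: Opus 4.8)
The plan is to use the variational (Rayleigh quotient) characterisation of $\mu(G)$ together with the Perron eigenvector of $G'$. First I would record that $G$ and $G'$ have the same vertex set, and that $G$ is obtained from $G'$ by a local ``merging'' transformation: after relabelling so that $t_m'\ge t_{m+1}'$ (allowed since the two summands play symmetric roles in $G'$), write $B_1=S^+_{t_m',1}$, $B_2=S^+_{t_{m+1}',1}$, let $c_1$ be the centre (dominating vertex) of $B_1$, and pass from $G'$ to $G$ by deleting all edges inside $B_2$ and joining $c_1$ to every vertex of $B_2$. One checks directly that this produces exactly $(H+S^+_{t_m,1})\wedge K_1=G$, with the number of edges preserved when $t_{m+1}'\ge 3$. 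Writing $\mathbf{x}$ for the Perron eigenvector of $G'$, $\mu'=\mu(G')$, $w$ for the apex vertex coming from $\wedge K_1$, and $\Delta=A(G)-A(G')$, the Rayleigh inequality gives $\mu(G)\ge \mathbf{x}^{T}A(G)\mathbf{x}/\mathbf{x}^{T}\mathbf{x}=\mu'+\mathbf{x}^{T}\Delta\mathbf{x}/\mathbf{x}^{T}\mathbf{x}$, so it suffices to prove $\mathbf{x}^{T}\Delta\mathbf{x}\ge 0$ and then upgrade this to a strict inequality.

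For the computation I would exploit that $\mathbf{x}$ is constant on the orbits of $\mathrm{Aut}(G')$: all ordinary leaves of a block share one value, and the two triangle-leaves of a block share one value. Since every vertex of $B_2$ has, in $G'$, only $w$ and its block-neighbours as neighbours, the eigenvalue equation reads $\mu' x_v=x_w+\sum_{u\sim_{B_2}v}x_u$; in particular the ordinary-leaf value is $\ell_2=(x_{c_2}+x_w)/\mu'$ and the triangle-leaf value is $\rho_2=(x_{c_2}+x_w)/(\mu'-1)$, where $c_2$ is the centre of $B_2$. Collecting the contributions of $\Delta$ (the added edges $c_1v$, $v\in B_2$, against the deleted edges of $B_2$) and substituting these values, I obtain the identity $\tfrac12\mathbf{x}^{T}\Delta\mathbf{x}=x_{c_1}x_{c_2}-\rho_2^{2}+(x_{c_1}-x_{c_2})(\mu' x_{c_2}-x_w)$. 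Each summand is nonnegative: $\mu' x_{c_2}-x_w=\sum_{\text{leaves of }B_2}x_u>0$; the relation $(\mu'+1)(x_{c_2}-\rho_2)=(t_{m+1}'-3)\ell_2$, got by eliminating $x_w$ between the centre and triangle-leaf equations, gives $x_{c_2}\ge \rho_2$ once $t_{m+1}'\ge 3$, whence $x_{c_1}x_{c_2}\ge x_{c_2}^{2}\ge\rho_2^{2}$; and $x_{c_1}\ge x_{c_2}$ follows from the explicit formula $x_{c_i}=x_w(1+g_i)/(\mu'-g_i)$ with $g_i=(t_i'-3)/\mu'+2/(\mu'-1)$ increasing in the block size.

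Finally, to promote $\mu(G)\ge\mu'$ to a strict inequality I would argue that $\mathbf{x}$ is not a Perron vector of $G$: evaluating $A(G)\mathbf{x}$ at $c_1$ yields $\mu' x_{c_1}+\sum_{v\in B_2}x_v>\mu'x_{c_1}$, so $A(G)\mathbf{x}\neq\mu'\mathbf{x}$. Hence if $\mathbf{x}^{T}\Delta\mathbf{x}>0$ we are done at once, while if $\mathbf{x}^{T}\Delta\mathbf{x}=0$ then $\mathbf{x}$ attains Rayleigh quotient value $\mu'$ without being a Perron vector of the irreducible matrix $A(G)$, which forces $\mu(G)>\mu'$. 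The step I expect to be the main obstacle is the monotonicity $x_{c_1}\ge x_{c_2}$: it rests on the formula for $x_{c_i}$ being valid, i.e.\ on $\mu'>g_i$, which in turn uses the crude but essential bound $\mu'\ge\sqrt{n-1}$ coming from the apex vertex $w$; I would also need to dispose of the degenerate blocks with $t_{m+1}'=2$ (where $B_2=K_2$ has no centre) by a direct variant of the same computation, giving $\tfrac12\mathbf{x}^{T}\Delta\mathbf{x}=\beta(2x_{c_1}-\beta)>0$ with $\beta$ the common value on $K_2$.
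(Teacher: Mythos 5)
Your proposal is correct and is essentially the paper's own argument: both proofs take the Perron vector $x$ of $G'$, realise $G$ as the graph obtained by deleting all edges of one block $S^+_{t_{m+1}',1}$ and joining its vertices to the other centre, deduce $\mu(G)\ge x^{T}A(G)x/x^{T}x\ge \mu(G')$, and obtain strictness by showing that $x$ cannot be a $\mu(G)$-eigenvector of the irreducible matrix $A(G)$. The only real difference is in how the centre comparison is secured: where you order the blocks by size and prove $x_{c_1}\ge x_{c_2}$ via explicit formulas (with the attendant $\mu'>g_i$ validity checks and the $K_2$ degenerate case), the paper simply assumes without loss of generality that $x_{u_1}\ge x_{u_2}$ --- legitimate because merging into either centre yields the same graph $(H+S^{+}_{t_m,1})\wedge K_1$ --- and invokes its Claim~\ref{CLAIM: L7c1} (eigenvector entries ordered by neighbourhood containment) to get $x_{u_2}\ge x_v$ on the absorbed block, which makes the quadratic-form comparison immediate and renders your closed-form computations unnecessary.
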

\begin{proof}
The follow claim can be checked directly from the definitions of  eigenvalue and eigenvector.
\begin{claim}\label{CLAIM: L7c1}
Let $G$ is a connected graph and $x$ be a positive eigenvector of $A(G)$ corresponding to $\mu(G)$. For any $uv\in E(G)$, if $P_v(u)=\emptyset$ and $P_u(v)\ne \emptyset$ then $x_v>x_u$, and if $P_v(u)=P_u(v)=\emptyset$ then $x_v=x_u$.
\end{claim}

Let $x=x(G')$ be the eigenvector of $A(G')$ corresponding to $\mu(G')$. Let $u_1$ and $u_2$ be centers of $S_{t_m',1}$ and $S_{t_{m+1}',1}$, respectively. By Claim~\ref{CLAIM: L7c1}, we have $x_{u_1}\ge x_v$ for any $ v\in V(S^+_{t_m', 1})$ and $x_{u_2}\ge x_v$ for any $v\in V(S^+_{t_{m+1}', 1})$.
Without loss of generality, assume $x_{u_1}\ge x_{u_2}$. Note that  $G$ can be seen as the graph obtained from $G'$ by deleting all  the edges of $E(S^+_{t_{m+1}',1})$ and adding new edges connecting $u_1$ to all vertices of $V(S^+_{t_{m+1}',1})$.
Clearly,
$$\mu(G')=xA(G'){x^T}\le xA(G)x^T\le \mu(G), $$
the equality holds only if $x_{u_2}=x_{u_1}$. This implies that $x$ is not an eigenvector of $A(G)$ corresponding to $\mu(G)$, otherwise, we have $x_{u_1}>x_{u_2}$ by Claim~\ref{CLAIM: L7c1}. Therefore, $\mu(G')<\mu(G)$.
\end{proof}	


\section{Proof of Theorem \ref{:Main}}

Clearly, $\mu=\mu(S_{n,k})$ is the largest root of the polynomial
$f(x)=x^2-(k-1)x-k(n-k)$.
Let $G$ be a $C_{\ge 2k+2}$-free graph of order $n$ with maximum spectral radius. 
By the remark of Lemma~\ref{LEM: Move-Neighbor}, we may assume $P_u(v)=\emptyset $ or $P_v(u)=\emptyset$ for any edge $uv\in E(G)$.
For $u\in V(G)$, let $Y_u=V(G)\setminus N[u]$, $S_u=N(u)\cap N(Y_u)$, 
{ and $T_u=N(u)\setminus S_u$}. Let $s_u=|S_u|, t_u=|T_u|$. The following claim holds.

\begin{claim}\label{CLAIM: c3}
 For any $u\in V(G)$, $G[S_u]\cong K_{s_u}$, $K_{s_u}\vee \overline{K_{t_u}}\subseteq G[N(u)]$,
 and $e_{G}(T_u, Y_u)=0$.
\end{claim}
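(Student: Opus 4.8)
The plan is to turn the normalization from the remark after Lemma~\ref{LEM: Move-Neighbor} into a statement about nested closed neighborhoods, and then read off all three assertions from it. Recall that under this normalization, for every edge $uv$ we have $P_u(v)=\emptyset$ or $P_v(u)=\emptyset$. First I would reformulate this in terms of closed neighborhoods: if $P_v(u)=\emptyset$, then every neighbor of $u$ other than $v$ lies in $N(v)$, and combining this with $u\in N(v)$ gives $N[u]\subseteq N[v]$; symmetrically $P_u(v)=\emptyset$ gives $N[v]\subseteq N[u]$. Hence for every edge $uv$ the closed neighborhoods are comparable: $N[u]\subseteq N[v]$ or $N[v]\subseteq N[u]$.

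The key step is to show that every vertex $w\in S_u$ satisfies $N[u]\subseteq N[w]$. By definition $w\in N(u)\cap N(Y_u)$, so $w$ has a neighbor $y\in Y_u$. Since $y\in Y_u=V(G)\setminus N[u]$ we have $y\notin N[u]$, while $y\in N[w]$; thus $N[w]\not\subseteq N[u]$. Applying the comparability property to the edge $uw$ then forces $N[u]\subseteq N[w]$, which is exactly the containment I want.

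From this containment the three conclusions follow in one line each. For $G[S_u]\cong K_{s_u}$: given distinct $w_1,w_2\in S_u$, we have $w_2\in N(u)\subseteq N[u]\subseteq N[w_1]$, and since $w_1\ne w_2$ this yields $w_1w_2\in E(G)$, so $S_u$ induces a clique. For $K_{s_u}\vee\overline{K_{t_u}}\subseteq G[N(u)]$ it then remains to join $S_u$ to $T_u$ completely: for $w\in S_u$ and $z\in T_u\subseteq N(u)\subseteq N[u]\subseteq N[w]$, with $z\ne w$ (as $S_u\cap T_u=\emptyset$), we get $z\in N(w)$, i.e. $wz\in E(G)$. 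Finally $e_{G}(T_u,Y_u)=0$ is immediate from the very definition $T_u=N(u)\setminus S_u=N(u)\setminus N(Y_u)$: every $z\in T_u$ lies outside $N(Y_u)$ and hence has no neighbor in $Y_u$.

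The only place requiring genuine thought—the step I would flag as the crux—is the passage from the purely local normalization ($P_u(v)$ or $P_v(u)$ empty on each edge) to the global comparability of closed neighborhoods, together with the observation that membership in $S_u$ automatically certifies $N[w]\not\subseteq N[u]$ through a witness $y\in Y_u$. Once that is in hand, all three statements are direct deductions, and notably no circumference bound or spectral estimate is needed for this claim; it is a structural consequence of the normalization alone.
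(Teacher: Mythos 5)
Your proof is correct and is essentially the paper's own argument in a different packaging: your ``comparable closed neighborhoods along each edge'' observation is just a restatement of the normalization that $P_u(v)=\emptyset$ or $P_v(u)=\emptyset$ for every edge $uv$, and your key step (a witness $y\in Y_u$ forces $N[u]\subseteq N[w]$ for $w\in S_u$) is the contrapositive form of the paper's contradiction, which exhibits $x\in P_u(v)$ and $w\in P_v(u)$ when an edge $vw$ is missing. Nothing further is needed.
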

By definition of $T_u$, $e_G(T_u, Y_u)=0$. If there are two vertices $v, w$ in $S_u$ (or a vertex $v\in S_u$ and a vertex $w\in T_u$) such that $vw\notin E(G)$, then there is at least one vertex $x\in Y_u$ with $vx\in E(G)$, so $P_u(v)\not=\emptyset$ (clearly, $x\in P_u(v)$) and $P_v(u)\not=\emptyset$ ( as $w\in P_v(u)$), a contradiction. Hence $G[S_u]\cong K_{s_u}$ and $K_{s_u}\vee \overline{K_{t_u}}\subseteq G[N(u)]$.

By the above claim
and $G$ contains no $C_{\ge 2k+2}$, it is an easy task to check that the following claim holds.
\begin{claim}\label{CLAIM: c5}
For any $u\in V(G)$, we have

(1) $s_u\le 2k$;

(2) $\min\{s_u, t_u\}\le k$; in particular, if $s_u\ge t_u$ then $d(u)\le {2k+1}$.
\end{claim}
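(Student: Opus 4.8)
The plan is to read off, from Claim~\ref{CLAIM: c3}, the precise local structure around a fixed vertex $u$ and then turn each of the three assertions into a statement of the form ``if the bound fails, then $G$ contains a cycle of length at least $2k+2$,'' contradicting the choice of $G$ as $C_{\ge 2k+2}$-free. The three building blocks I will use are: $\{u\}\cup S_u$ induces a clique $K_{s_u+1}$ (since $S_u\subseteq N(u)$ and $G[S_u]\cong K_{s_u}$); the pair $(S_u,T_u)$ spans a complete bipartite graph $K_{s_u,t_u}$ (from $K_{s_u}\vee\overline{K_{t_u}}\subseteq G[N(u)]$); and, combining these, $\{u\}\cup S_u\cup T_u$ contains the complete split graph $K_{s_u+1}\vee\overline{K_{t_u}}$ as a subgraph, because every vertex of $T_u$ is adjacent both to $u$ and to all of $S_u$.

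For part~(1), I would argue that $s_u\ge 2k+1$ would make $\{u\}\cup S_u$ a clique on at least $2k+2$ vertices; since $K_m$ contains a cycle of every length up to $m$, this yields a $C_{2k+2}$, a contradiction. Hence $s_u\le 2k$. For the first assertion of part~(2), I would note that $\min\{s_u,t_u\}\ge k+1$ forces $K_{s_u,t_u}\supseteq K_{k+1,k+1}$, and the balanced complete bipartite graph $K_{k+1,k+1}$ has a Hamilton cycle $C_{2k+2}$, again a contradiction; hence $\min\{s_u,t_u\}\le k$.

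For the ``in particular'' clause, assume $s_u\ge t_u$ and consider the complete split graph $K_{s_u+1}\vee\overline{K_{t_u}}\subseteq G$. Since its independent part has size $t_u\le s_u<s_u+1$, that is, at most the size of its clique part, this graph is Hamiltonian: listing the clique vertices as $w_0=u,w_1,\dots,w_{s_u}$ and the independent vertices as $z_1,\dots,z_{t_u}$, the cycle $z_1w_1z_2w_2\cdots z_{t_u}w_{t_u}w_{t_u+1}\cdots w_{s_u}w_0z_1$ visits all $s_u+1+t_u$ vertices and uses only edges present in $G$. Its length is $d(u)+1$, so $C_{\ge 2k+2}$-freeness forces $d(u)+1\le 2k+1$, giving $d(u)\le 2k\le 2k+1$, as required.

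The routine points here are the circumference facts for cliques, for balanced complete bipartite graphs, and for complete split graphs; the only place needing care is verifying that the cycles exhibited really lie inside $G$, which is exactly what Claim~\ref{CLAIM: c3} guarantees (the clique on $S_u$, the $S_u$--$T_u$ join, and the adjacency of $u$ to all of $N(u)$). The main conceptual step---and the one worth stating cleanly---is the Hamiltonicity of $K_{s_u+1}\vee\overline{K_{t_u}}$ when $t_u\le s_u+1$, since it is this combined split-graph structure, rather than the clique or the bipartite graph alone, that pins down $d(u)$.
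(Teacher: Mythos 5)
Your proof is correct and matches the paper's approach: the paper merely asserts that Claim~\ref{CLAIM: c5} ``is an easy task to check'' from Claim~\ref{CLAIM: c3} and $C_{\ge 2k+2}$-freeness, and your clique / complete-bipartite / complete-split-graph verification is exactly that routine check made explicit. Note that your Hamilton cycle in $K_{s_u+1}\vee\overline{K_{t_u}}$ actually yields the slightly stronger bound $d(u)\le 2k$ when $s_u\ge t_u$ and $t_u\ge 1$ (and when $t_u=0$ one has $d(u)=s_u\le 2k$ directly from part~(1), covering the one edge case your explicit cycle skips), which of course implies the stated $d(u)\le 2k+1$.
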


\vspace{5pt}
\centerline {\bf Proof of Theorem~\ref{:Main} (a)}
\vspace{5pt}
Suppose $G$ is a $C_{\ge 2k+1}$-free graph with $\mu(G)\ge \mu=\mu(S_{n,k})$ and $G\not=S_{n,k}$.
Let $A=A(G)$, $B=f(A)=A^2-(k-1)A-k(n-k)I$, and $B_u=\sum\limits_{1\leq i\leq n}B_{iu}$ for every $u\in V(G)$.
By definition, for every $u\in V(G)$,
\begin{align}\label{EQN: e3}
B_u=e(N(u), Y_u)+2e(N(u))-(k-2)d(u)-k(n-k).
\end{align}
{By Lemma~\ref{LEM: Au-Matrix}, there must be a vertex $u\in V(G)$ such that $B_u\ge 0$. To get a contradiction, we will show that either $B_u\le 0$  and the equality holds if and only if $G\cong S_{n,k}$ or $B_u>0$ but $\mu(G)<\mu$.

\begin{flushleft}
{\bf Case 1.}  $G[N(u)]$ is connected.
\end{flushleft}

\noindent{\bf Subcase 1.1.} $d(u)>3k$.

By Claim~\ref{CLAIM: c5}, $s_u<t_u$ and hence $\min\{s_u, t_u\}=s_u\le k$. This implies that
\begin{equation}\label{CLAIM: c4}
e(N(u), Y_u)=e(S_u, Y_u)\le k|Y_u|.
\end{equation}
Since $G$ contains no $C_{\ge 2k+1}$, $G[N(u)]$ contains no $P_{\ge 2k}$.  By Fact~\ref{FACT: EX-P_{2k+2}}, we have
\begin{align}\label{EQN: e4}
e(N(u))\leq (k-1)d(u)-(k^2-k)/2,
\end{align}
 with equality if and only if $G[N(u)]\cong S_{d(u), k-1}$.
By (\ref{EQN: e3}),
we have
\begin{align*}
B_u\leq k|Y_u|+2(k-1)d(u)-(k^2-k)-(k-2)d(u)-k(n-k)=0,
\end{align*}
 and the equality holds  if and only if the equalities hold in (\ref{CLAIM: c4}) and (\ref{EQN: e4}) if and only if $s_u=k-1$ and $Y_u=\emptyset$ if and only if $G\cong S_{n,k}$, as desired.


\vspace{5pt}
\noindent{\bf Subcase 1.2.} $d(u)\leq 3k$.

First we claim that
\begin{equation}\label{EQN: e5}
e(S_u, Y_u)>(k-1)|Y_u|.
\end{equation}
If not, then
\begin{equation*}
\begin{split}
B_u&\leq (k-1)|Y_u|+d(u)(d(u)-1)-(k-2)d(u)-k(n-k)\\
&=(d(u)-k+1)^2+k-n\\
&\leq (2k+1)^2+k-n\\
&<0,
\end{split}
\end{equation*}
the last inequality holds because {$n\geq 13k^2$.}

Inequality~(\ref{EQN: e5}) implies that $s_u\ge k$. Note that $s_u\le 2k$. Hence $|Y_u|=n-d(u)-1\ge n-3k-1>9k^2>s_u(k-1)$. By Lemma~\ref{LEM: P_2k+1}, there is a path $P$ of length at least $2k-1$ with two ends in $S_u$ and $|V(P)\cap Y_u|\ge k-1$. If $|S_u\cup T_u|>k$, then it is an easy task to find a cycle of length at least $2k+1$ containing $u$ and $V(P)$, a contradiction. Therefore, $s_u=k$ and $t_u=0$.  Note that $d(u)=s_u=k$, $|Y_u|=n-k-1$, and $e(N(u), Y_u)\le s_u|Y_u|=k|Y_u|$, we have
\begin{align*}
B_u\leq k(n-k-1)+k(k-1)-(k-2)k-k(n-k)=0,
\end{align*}
and the equality holds if and only if $G\cong S_{n,k}$ (otherwise, $E(Y_u)\not=\emptyset$, hence $S_{n-1,k}^+\subseteq G[S_u\cup Y_u]$ and so we can find a path $P_{\ge 2k}$ with two ends in $S_u$ by Fact~\ref{FACT: P_2k}, thus we have a cycle of length $2k+1$ containing $V(P)\cup\{u\}$, a contradiction   ).


\vspace{5pt}
\noindent{\bf Case 2.}   $G[N(u)]$ is disconnected.
\vspace{5pt}

This implies that $S_u=Y_u=\emptyset$ and hence $d(u)=n-1$ and $u$ is the only vertex of $G$ with $G[N(u)]$ being disconnected. Let $A_{1}, \ldots, A_t$ be all the components of $G-\{u\}$ and $n_i=|V(A_i)|$ for $i=1,\ldots, t$.

\vspace{5pt}
\noindent{\bf Subcase 2.1.} There is some $i$ with $n_i>3k$.
\vspace{5pt}

Without loss of generality, assume $n_1>3k$. By Fact~\ref{FACT: EX-P_{2k+2}}, $e(A_1)\le (k-1)n_1-(k^2-k)/2$, and by Fact~\ref{FACT: P_l}, $\sum_{j=2}^t e(A_j)\le (k-1)(n-n_1-1)$. Therefore, we have
\begin{align*}\label{EQN: e6}
e(N(u))=\sum_{i=1}^te(A_i)\leq (k-1)(n-1)-(k^2-k)/2.
\end{align*}
By (\ref{EQN: e3}), we have 
$$B_u\le 2(k-1)(n-1)-(k^2-k)-(k-2)(n-1)-k(n-k)=0.$$
If $B_u<0$ then we are done. Hence assume $B_u=0$. If $\mu(G)=\mu=\mu(S_{n,k})$, then Lemma~\ref{LEM: Au-Matrix} implies that each $B_v=0$ for all $v\in V(G)$. Note that $u$ is the only vertex with $G[N(u)]$ being disconnected. Case 1 implies that if there is a vertex $v$ with $G[N(v)]$ being connected and $B_v=0$, then $G\cong S_{n,k}$, as desired.

\vspace{5pt}
\noindent{\bf Subcase 2.2.}  For all $i\in\{1,\ldots, t\}$, $n_{i}\leq 3k$.

Let $g(x)=x^2-kx-(k-1/2)(n-k)$ and reset $B=g(A)=A^2-kA-(k-1/2)(n-k)I$. Let $B_v=\sum\limits_{1\leq i\leq n}B_{iv}$ for every $v\in V(G)$. Hence
\begin{equation}\label{EQN: e7}
B_v=2e(N(v))+e(N(v), Y_v)-(k-1)d(v)-(k-1/2)(n-k).
\end{equation}
Let $\mu'$ be the largest root of the polynomial $g(x)$. Note that $\mu=\mu(S_{n,k})$ is the largest root of $f(x)=x^2-(k-1)x-k(n-k)$. By simple computation, we have $\mu'<\mu$ when {$n\geq 13k^2$}. By Lemma~\ref{LEM: Au-Matrix}, to prove $\mu(G)\le \mu'(<\mu(S_{n,k}))$, it is sufficient to show $B_v\le 0$ for any $v\in V(G)$.




By Fact~\ref{FACT: P_l}, $e(N(u))\le (k-1)(n-1)$. By (\ref{EQN: e7}) and {$n\ge 13k^2$,} we have
\begin{align*}
B_u\leq 2(k-1)(n-1)-(k-1)(n-1)-(k-\frac 12)(n-k)<0.
\end{align*}

For every $i\in\{1,\ldots, t\}$ and $v\in V(A_i)$,  since $n_i\leq 3k$, we have $d(v)\le 3k$. By Fact~\ref{FACT: P_l}, $e(N(v))\le (k-1)d(v)$.
Note that $u\in N(v)$ and $d(u)=n-1$, we have $e(N(v), Y_v)\le n-d(v)+(d(v)-1)(n_i-d(v))$. By (\ref{EQN: e7}), we have
\begin{equation*}
\begin{split}
B_v\le& 2(k-1)d(v)+n-d(v)+(d(v)-1)(n_i-d(v))\\
& -(k-1)d(v)-(k-1/2)(n-k)\\
<&0,
\end{split}
\end{equation*}
the last inequality holds since { $n\ge 13k^2$} and $d(v)\le n_i\le 3k$.

This  completes the proof of Theorem~\ref{:Main} (a).


\vspace{5pt}
\centerline{ \bf Proof of Theorem~\ref{:Main} (b)}
\vspace{5pt}
Now suppose that there is  a connected $C_{\ge 2k+2}$-free graph $G$ of order $n$ such that $\mu(G)\ge \mu(S^+_{n,k})$ and $G\not= S_{n,k}^+$. Without loss of generality, we assume that $G$ has maximum spectral radius among all of such graphs of order $n$.
Let $A=A(G)$ be the adjacent matrix of $G$, and let $B=f(A)=A^2-(k-1)A-k(n-k)I$ and $B_u=\sum\limits_{1\leq i\leq n}B_{iu}$ for $u\in V(G)$.
By Lemma~\ref{LEM: Au-Matrix}, there must exist some vertex $u\in V(G)$ such that $B_u>0$. In the following, we will find a contradiction, that is we show that either $B_u\le 0$ 
or $B_u>0$ but $\mu(G)\le\mu(S_{n,k}^+)$ with equality if and only if $G\cong S_{n,k}^+$.
\begin{flushleft}
	{\bf Case 1.} 
$G[N(u)]$ is connected.
\end{flushleft}

\noindent{\bf Subcase 1.1.} $d(u)>3k$.

By Claim~\ref{CLAIM: c5}, $\min\{s_u, t_u\}=s_u\le k$. This implies that
\begin{equation*}\label{EQN: e8}
e(N(u), Y_u)=e(S_u, Y_u)\le k|Y_u|.
\end{equation*}
Since $G$ is $C_{\geq 2k+2}$-free, $G[N(u)]$ does not contain a $P_{\ge 2k+1}$. By Fact~\ref{FACT: EX-P_2k+3},
\begin{align*}\label{EQN: e80}
e(N(u))\leq (k-1)d(u)-(k^2-k)/2 +1,
\end{align*}
with equality holds if and only if $G[N(u)]\cong S_{d(u), k-1}^+$. By (\ref{EQN: e3}) and note that $|Y_u|=n-d(u)-1$,
we have
\begin{align*}
B_u\leq k|Y_u|+2(k-1)d(u)-(k^2-k)+2-(k-2)d(u)-k(n-k)=2,
\end{align*}
where the equality holds if and only if $Y_u=\emptyset$ (otherwise, $s_u=k$, contradicts to $G[N(u)]\cong S^+_{d(u),k-1}$) and hence $G\cong S_{n,k}^+$; and $B_u=1$ if and only if $e(N(u), Y_u)=e(S_u, Y_u)=k|Y_u|-1$ and $G[N(u)]\cong S^+_{d(u), k-1}$ if and only if $|S_u|=k-1$ and $|Y_u|=1$, that is $G$ is a subgraph of $S_{n,k}^+$ with $e(G)<e(S_{n,k}^+)$ and so we have $\mu(G)<\mu(S_{n,k}^+)$.

\vspace{5pt}
\noindent{\bf Subcase 1.2.} $d(u)\leq 3k$.
\vspace{5pt}

With a same argument as the proof of inequality (\ref{EQN: e5}),  we have $$e(N(u), Y_u)=e(S_u, Y_u)>(k-1)|Y_u|.$$
So $s_u\ge k$. Since $s_u\le 2k$, $|Y_u|=n-d(u)-1>9k^2>s_uk>s_u(k-1)$. By Lemma~\ref{LEM: P_2k+1}, there is a path $P_{\ge 2k-1}$ with two ends in $S_u$ and $|V(P_{\ge 2k-1})\cap Y_u|\ge k-1$. If $|S_u\cup T_u|>k+1$, then it is an easy task to find a cycle of length at least $2k+2$ containing $u$ and $V(P_{\ge 2k-1})$, a contradiction. Therefore, $s_u+t_u\le k+1$. 

If $s_u=k$ and $t_u=0$, then  $e(N(u))=k(k-1)/2$. By (\ref{EQN: e3}), we have
\begin{align*}
B_u\leq k(n-k-1)+k(k-1)-(k-2)k-k(n-k)=0.
\end{align*}

If $s_u=k$ and $t_u=1$, then $e(N(u))=k(k+1)/2$. By (\ref{EQN: e3}), we have
\begin{align*}
B_u\leq k(n-k-2)+k(k+1)-(k-2)(k+1)-k(n-k)=2,
\end{align*}
and equality holds if and only if $e(S_u, Y_u)=k|Y_u|$ if and only if $G\cong S_{n,k}^+$ (Otherwise, $E(G[Y_u])\not=\emptyset$ and so $G[S_u\cup Y_u]$ contains a subgraph $S_{n-2,k}^+$. By Fact~\ref{FACT: P_2k},  $G[S_u\cup Y_u]$ has a path $P_{\ge 2k}$ with two ends in $S_u$ and so it is an easy task to find a cycle $C_{\ge 2k+2}$ containing $V(P_{\ge 2k})\cup\{u\}\cup T_u$, a contradiction), 
as desired; and $B_u=1$ if and only if $e(S_u, Y_u)=k|Y_u|-1$ if and only if $G\subseteq S_{n,k}^+$ with $e(G)<e(S_{n,k}^+)$ (Otherwise, $G[S_u\cup Y_u]$ contains a subgraph $S_{n-2, k}^+-e$, where $e$ is some edge joining a vertex in $S_u$ to a vertex in $Y_u$, similar as in Fact~\ref{FACT: P_2k}, we can find a path $P_{\ge 2k}$ in $G[S_u\cup Y_u]$ with two ends in $S_u$ and so again we get a contradiction), therefore $\mu(G)<\mu(S_{n,k}^+)$, as claimed.

Now suppose $s_u=k+1$ and $t_u=0$. Then $e(N(u))=k(k+1)/2$.
If $e(S_u, Y_u)<k|Y_u|-1$, then (\ref{EQN: e3}) implies that
$$B_u\leq k(n-k-2)-2+k(k+1)-(k-2)(k+1)-k(n-k)=0.$$
Hence assume  $e(S_u, Y_u)\geq k|Y_u|-1$. We first claim that there is no vertex $y$ in $Y_u$ such that $e(y, S_u)=k+1$. If not, choose a vertex $y\in Y_u$ such that $e(y, S_u)$ is minimal among all vertices in $Y_u$, then $e(y, S_u)\le k-1$ because $e(S_u, Y_u)\le k|Y_u|$. Hence
 $$e(S_u, Y_u\setminus\{y\})=e(S_u, Y_u)-e(y, S_u)\ge k|Y_u|-1-(k-1)=k|Y_u\setminus\{y\}|.$$
Note that $|Y_u\setminus\{y\}|=n-k-3>(k+1)k$. By Lemma~\ref{LEM: P_2k+1}, $G[S_u\cup(Y_u\setminus\{y\})]$ contains a path $P_{\ge 2k+1}$ with two ends in $S_u$ and hence, combining the vertex $u$, we get a cycle of length at least $2k+2$ in $G$, a contradiction. By the claim and $k|Y_u|-1\le e(S_u, Y_u)\le k|Y_u|$, $Y_u$ contains at most one vertex $y$ with $e(y, S_u)=k-1$.
Now choose a vertex $v$ from $S_u$ such that $d(v)$ is minimal among all vertices in $S_u$. Then $N(v)\cap Y_u\subseteq N(w)\cap Y_u$ for any $w\in S_u$ by the minimality of $d(v)$ and the assumption that $P_v(w)=\emptyset$ or $P_w(v)=\emptyset$. Let $S_u'=S_u\setminus\{v\}$ and $Y_u'=Y_u\setminus(N(v)\cap Y_u)$. We claim that $$|Y_u'|\geq k+1.$$
If not, we have
$$e(S_u, Y_u)\ge (k+1)(|Y_u|-k)=k|Y_u|+|Y_u|-k(k+1)>k|Y_u|,$$
a contradiction. Therefore, $G[S_u'\cup Y_u']$ contains a subgraph isomorphic to $S_{2k, k}$. By Fact~\ref{FACT: P_2k},  $G[S_u'\cup Y_u']$ contains a path $P_{\ge 2k-1}$ with two ends, say $\{a, b\}$, in $S_u'$. Choose a vertex $z$ from $N(v)\cap Y_u$ so that $zb\in E(G)$, this can be done since $N(v)\cap Y_u\subseteq N(b)\cap Y_u$.   So $P_{\ge 2k-1}+bzvua$ is a cycle of length at least $2k+2$ in $G$, a contradiction.

\begin{flushleft}
	{\bf Case 2.} $G[N(u)]$ is not connected.
\end{flushleft}
This implies that $S_u=Y_u=\emptyset$ and hence $d(u)=n-1$. 
Let $A_{1}, \ldots, A_t$ be the components of $G-\{u\}$ and let $n_i=|V(A_i)|$.
  Without loss of generality, assume $n_1\le \ldots\le n_t$ and let $s$ be the largest integer such that $n_s\le 3k$.
Set $H=A_1+\cdots+ A_{s}$ and $H'=A_{s+1}+\cdots+ A_{t}$. Let $h=|V(H)|$. So $|V(H')|=n-1-h$.

If $V(H')\not=\emptyset$, since $n_{i}>3k$ for $i\in\{s+1, t\}$ and $G$ is $C_{\ge 2k+2}$-free, Fact~\ref{FACT: EX-P_2k+3} implies that
\begin{equation}\label{EQN: e10}
\begin{split}
e(H')&=\sum_{i=s+1}^t e(A_i)\\
&\le \sum_{i=s+1}^t \left[(k-1)n_i-(k^2-k)/2+1\right]\\
&\le (k-1)|V(H')|-(k^2-k)/2+1,
\end{split}
\end{equation}
{with equality  holds if and only if $k=2$ and $A_i\cong S^+_{n_i, 1}$ for each $i=\{s+1,\ldots, t\}$ or $k\ge 3$ and $H'$ has only one component.

If $|V(H)|<2k-2$ and $V(H)\not=\emptyset$,
then $$e(H)\le |V(H)|(|V(H)|-1)/2< (k-1)h-1.$$
{
If $V(H)=\emptyset$ and $k\ge 3$ or $k=2$ and there exists one component $A_i\not=S^+_{n_i,k-1}$ for some $i\in\{s+1, \ldots, t\}$,
then $e(H')\le (k-1)|V(H')|-(k^2-k)/2$ because $G[N(u)]$ is disconnected.
} Therefore, we always have
$$2e(N(u))=2e(H)+2e(H')\le 2(k-1)(n-1)-(k^2-k).$$
Note that $|V(H)|<2k-2$ implies that $V(H')\not=\emptyset$.
By (\ref{EQN: e3}),
\begin{equation*}
\begin{split}
B_u
&\le 2(k-1)(n-1)-(k^2-k)-(k-2)(n-1)-k(n-k)=0.
\end{split}
\end{equation*}
{ If $V(H)=\emptyset$, $k=2$ and each component $A_i\cong S^+_{n_i, 1}$,  then $G=H'\wedge K_1$. By Lemma~\ref{LEM: k=2}, we have $\mu (G)< \mu(S_{n,k}^+)$.}


Therefore,  in the following we assume $|V(H)|\ge 2k-2$. Recall that $\mu=\mu(S_{n,k})$.

 We claim that $-1/3\le\frac{x-\mu}{n-1-\mu}\le1$ for $x\in \{0,1,\ldots, n-1\}$. When $x=n-1$, we have $\frac{x-\mu}{n-1-\mu}=1$. So it is sufficient to show that $\frac{\mu}{n-1-\mu}\le 1/3$.
Since {$\mu=(k-1)/{2}+\sqrt{k(n-k)+(k-1)^2/4}\le (k-1)/{2}+\sqrt{kn} $ and $n\ge 13k^2$}, we have $n \ge \sqrt{n}\sqrt{13k^2}\ge 5\sqrt{nk}> 4\sqrt{nk}+2k\ge 4\mu+1$. This implies that $\frac{\mu}{n-1-\mu}\le 1/3$.

\vspace{5pt}
\noindent{\bf Subcase 2.1.} $|V(H')|=\emptyset$.
\vspace{5pt}

{
Let $$g(x)=x^2-(k-1)x-k(n-k)-\frac{x-\mu}{n-1-\mu}(n+k^2-k-1)$$ and reset $$ B=g(A)=A^2-(k-1)A-k(n-k)I-\frac{n+k^2-k-1}{n-1-\mu}(A-\mu I).$$
Let $B_v=\sum\limits_{1\leq i\leq n}B_{iv}$ for each $v\in V(G)$. Hence
\begin{equation}\label{EQN: e12}
B_v=2e(N(v))+e(N(v), Y_v)-(k-2)d(v)-k(n-k)-\frac{d(v)-\mu}{n-1-\mu}(n+k^2-k-1).
\end{equation}
Clearly, $\mu=\mu(S_{n,k})$ is still the largest root of $g(x)$. By Lemma~\ref{LEM: Au-Matrix}, to show $\mu(G)<\mu(S_{n,k}^+)$, it is sufficient to show $B_v\le 0$ for any $v\in V(G)$ as $\mu(S_{n,k})<\mu(S_{n,k}^+)$.
Since $G$ is $C_{\ge 2k+2}$-free, $H$ is $P_{\ge 2k+1}$-free. By Fact~\ref{FACT: P_l}, $e(H)\le(2k-1)h/2$.
By (\ref{EQN: e12}), we have
\begin{equation*}
\begin{split}
B_u=&2e(H)-(k-2)(n-1)-k(n-k)-n-k^2+k+1\\
{\le}&(k+1)(n-1)-k(n-k)-n-k^2+k+1\\
=&0.
\end{split}
\end{equation*}

For any $v\in V(G)$ with $v\ne u$, note that $G[N(v)]$ is connected and $d(v)\le 3k$ because each component of $H$ has order at most $3k$.  By Fact~\ref{FACT: P_l}, $e(N(v))\le (2k-1)d(v)/2$. Suppose $v\in V(A_i)$ then $Y_v=(\cup_{j\not=i}V(A_j))\cup (Y_v\cap V(A_i))$. Thus
$$e(N(v), Y_v)
\le n-d(v)+(d(v)-1)(n_i-d(v)).$$
By (\ref{EQN: e12}), we have
\begin{equation*}
\begin{split}
B_v\le& (2k-1)d(v)+n-d(v)+(d(v)-1)(n_i-d(v))\\
&-(k-2)d(v)-k(n-k)+(n+k^2-k-1)/3\\
\le& kd(v)+n+(n_i-1)^2/4-k(n-k)+(n+k^2-k-1)/3\\
\le& {3k^2+(3k-1)^2/4+(k^2-k-1)/3+(4/3-k)n }  \\
\le&0
\end{split}
\end{equation*}
where the first inequality holds since  $\frac{x-\mu}{n-1-\mu}\ge -1/3$ for any $x\in \{0,1,\ldots, n-1\}$, and the last inequality holds since {$n\ge 13k^2, k\ge 2$ and $d(v)\le n_i\le 3k$.}

\vspace{5pt}
\noindent{\bf Subcase 2.2.} $|V(H')|\not=\emptyset$.
\vspace{5pt}

Let
 $$g(x)=x^2-(k-1)x-k(n-k)-\frac{x-\mu}{n-1-\mu}(h+2)$$ and reset $$ B=g(A)=A^2-(k-1)A-k(n-k)I-\frac{h+2}{n-1-\mu}(A-\mu I).$$
Let $B_v=\sum\limits_{1\leq i\leq n}B_{iv}$ for each $v\in V(G)$. Hence
 \begin{equation}\label{EQN: e9}
 B_v=2e(N(v))+e(N(v), Y_v)-(k-2)d(v)-k(n-k)-\frac{d(v)-\mu}{n-1-\mu}(h+2).
 \end{equation}
Clearly,
$\mu=\mu(S_{n,k})$ is still the largest root of $g(x)$. Thus,
 to show $\mu(G)< \mu(S_{n,k}^+)$, it is sufficient to show $B_v\le 0$ for any $v\in V(G)$ with a same reason as in Subcase 2.1.

Since $G$ is $C_{\ge 2k+2}$-free, $H$ is $P_{\ge 2k+1}$-free. By Fact~\ref{FACT: P_l}, $e(H)\le(2k-1)h/2$.
Hence
\begin{equation*}
\begin{split}
2e(H)+2e(H')\le &(2k-1)h+2(k-1)(n-h-1)-(k^2-k)+2\\
             =& 2(k-1)(n-1)+h-(k^2-k)+2.
\end{split}
\end{equation*}
By (\ref{EQN: e9}), we have
\begin{equation*}
\begin{split}
B_u=&2e(H)+2e(H')-(k-2)(n-1)-k(n-k)-h-2\\
=&k(n-1)-(k^2-k)-k(n-k)\\
=&0.
\end{split}
\end{equation*}

For any $v\in V(G)$ with $v\ne u$, we have that $G[N(v)]$ is connected. If $v \in V(H)$ then $d(v)\le 3k$ because each component of $H$ has order at most $3k$.  By Fact~\ref{FACT: P_l}, $e(N(v))\le (2k-1)d(v)/2$. If $v\in V(A_i)$ for some $i\in\{1,\ldots, s\}$, note that $Y_v=(\cup_{j\not=i}V(A_j))\cup (Y_v\cap V(A_i))$, then
$$e(N(v), Y_v)
\le n-d(v)+(d(v)-1)(n_i-d(v)).$$
By (\ref{EQN: e9}), we have
\begin{equation*}
\begin{split}
B_v\le& (2k-1)d(v)+n-d(v)+(d(v)-1)(n_i-d(v))\\
&-(k-2)d(v)-k(n-k)+(h+2)/3\\
\le& kd(v)+n+(n_i-1)^2/4-k(n-k)+(h+2)/3\\
\le& { 3k^2+(3k-1)^2/4+k^2+2/3+(4/3-k)n }\\
\le&0
\end{split}
\end{equation*}
where the first inequality holds since  $\frac{x-\mu}{n-1-\mu}\ge -1/3$ for any $x\in \{0,1,\ldots, n-1\}$, and the last inequality holds since {$n\ge 13k^2$,  $k\ge 2$, $d(v)\le n_i\le 3k$ and $h\le n-1$.}


Now assume $v\in V(H')$.
\begin{claim}
We have $k\ge 3$.
\end{claim}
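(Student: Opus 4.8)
The plan is to prove this claim by contradiction: I assume $k=2$ and show that the resulting structure forces $\mu(G)<\mu(S_{n,2}^{+})$, contradicting the standing hypothesis $\mu(G)\ge\mu(S_{n,2}^{+})$. The data I start from is exactly the situation we are in: $u$ is universal, $G-u$ is disconnected, we are in Subcase 2.2 so that $V(H')\ne\varnothing$, and the reduction preceding Subcase 2.1 has left us with $|V(H)|\ge 2k-2=2$. Thus $G-u$ has at least one small component and at least one large component, so it has $t\ge 2$ components in total. Since $G$ is $C_{\ge 6}$-free and $u$ is universal, every component $A_j$ of $G-u$ is $P_{\ge 5}$-free (a $P_5$ inside a component would close up through $u$ into a $C_6$), and for each large component $A_i$ (those with $n_i>6$) Fact~\ref{FACT: EX-P_2k+3} applied with parameter $1$ gives $e(A_i)\le n_i$, with equality if and only if $A_i\cong S_{n_i,1}^{+}$.

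The engine of the argument is Lemma~\ref{LEM: k=2}: for $k=2$, merging two components of the form $S_{\cdot,1}^{+}$ hanging off the universal vertex strictly increases the spectral radius, and since each $S_{\cdot,1}^{+}$ has circumference $3$ the merged graph remains $C_{\ge 6}$-free. I would use the maximality of $\mu(G)$ together with this lemma to collapse $G-u$ to a single component. First, using the Perron vector $x=x(G)$ and the normalization from the remark after Lemma~\ref{LEM: Move-Neighbor} (equivalently Claim~\ref{CLAIM: c3}), I would argue that in the extremal $G$ each large component is exactly $S_{n_i,1}^{+}$: if some large $A_i$ were not edge-maximal, reattaching its edges to the highest-$x$ vertex of $A_i$ does not decrease $x^{T}A x$ and keeps the component $P_{\ge 5}$-free, hence does not decrease $\mu$. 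Once two components are in $S_{\cdot,1}^{+}$ form, Lemma~\ref{LEM: k=2} (with the remaining components playing the role of the arbitrary graph $H$ in that lemma) yields a $C_{\ge 6}$-free graph of strictly larger spectral radius.

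Iterating these two operations dissolves all of $G-u$ into a single $S_{n-1,1}^{+}$, transforming $G$ into $S_{n-1,1}^{+}\wedge K_1=S_{n,2}^{+}$; since at least one application of Lemma~\ref{LEM: k=2} is strict, this gives $\mu(G)<\mu(S_{n,2}^{+})$, contradicting $\mu(G)\ge\mu(S_{n,2}^{+})$. Hence $k=2$ is impossible and $k\ge 3$, as claimed.

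I expect the main obstacle to be the collapse step when a \emph{small} component is involved. A small component need not be an $S_{\cdot,1}^{+}$: for instance a $K_4$ survives the normalization (every edge of $K_5=K_4\wedge K_1$ has empty private neighbours on both sides) and carries more edges than $S_{4,1}^{+}$, so Lemma~\ref{LEM: k=2} does not apply to it verbatim, and reshaping it into $S^{+}$ form could actually lower $\mu$. To handle this I would not reshape such a component but instead absorb its vertices one at a time into the apex of a large $S_{\cdot,1}^{+}$ component, arguing via the eigenvector that moving a vertex from its low-value position in a small, low-degree component to a leaf of the high-degree apex does not decrease $x^{T}A x$ and creates no cycle of length at least $6$. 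Verifying this eigenvector inequality and the preservation of $C_{\ge 6}$-freeness during absorption is the real work; once $G-u$ consists only of $S_{\cdot,1}^{+}$ components the conclusion follows directly from Lemma~\ref{LEM: k=2}.
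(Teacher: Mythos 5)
Your opening moves match the paper's: under the normalization that every edge has an empty private-neighbor side, each component of $H'$ has a dominating vertex and, being $P_{\ge 5}$-free, is a subgraph of $S^+_{n_i,1}$, and maximality of $\mu(G)$ plus Lemma~\ref{LEM: k=2} collapses $H'$ to a single $A_t\cong S^+_{n_t,1}$. (The paper gets the domination property directly from the $P_v(u)=\emptyset$ or $P_u(v)=\emptyset$ condition, which is cleaner than your ``reattach edges to the highest-$x$ vertex'' sketch, but that part is repairable.) The genuine gap is exactly the step you flag as ``the real work'' and leave unverified: absorbing vertices of small components into the apex of a large $S^+$ component. This step is not just unproven --- the eigenvector inequality it rests on is false in relevant configurations. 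Take $H$ to consist of many copies of $K_4$ (which, as you note, survive the normalization) and $H'=S^+_{n_t,1}$ with $n_t$ barely above $3k=6$, say $n_t=7$. For a vertex $w$ in a $K_4$ the eigenvalue equation gives $x_w=x_u/(\mu-3)$, so the Perron mass of its three component-neighbors is about $3x_u/\mu$, while the apex $a$ of $S^+_{7,1}$ has $x_a\approx x_u/\mu$. Moving $w$ to a leaf under $a$ changes the quadratic form by $2x_w\bigl(x_a-3x_w\bigr)<0$, so the Rayleigh-quotient argument gives no lower bound on the new spectral radius, individual absorption steps need not be $\mu$-monotone, and the maximality of $G$ yields no contradiction. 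Your collapse therefore cannot be completed as described.

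The paper avoids this entirely: it never restructures $H$. For $h\le 3$ maximality forces $H\cong K_h=S^+_{h,1}$ and Lemma~\ref{LEM: k=2} finishes, but for $h\ge 4$ the paper keeps the small components as they are and instead proves $\mu(G)<\mu(S^+_{n,2})$ analytically, showing $B_v\le 0$ for every vertex (the computation for $v\in V(H')$ uses $2e(N(v))+e(N(v),Y_v)\le 2(n-1-h)+h$, and the bounds for $u$ and $v\in V(H)$ were already in place) and invoking Lemma~\ref{LEM: Au-Matrix} with the shifted polynomial $g(x)=x^2-(k-1)x-k(n-k)-\frac{x-\mu}{n-1-\mu}(h+2)$. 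So the merging machinery of Lemma~\ref{LEM: k=2} is reserved for $H'$ alone, and the small components are handled by the spectral bound rather than by graph transformations. To rescue your approach you would need either this polynomial argument or a genuinely new absorption lemma valid when the receiving $S^+$ component is small; as written, the proposal has a hole at its load-bearing step.
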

If not, then $k=2$. We first claim that each component $A_i$ of $H'$ has maximum degree $n_i-1$. Otherwise, choose a vertex $x$ with maximum degree in $A_i$ and a vertex $y\in N_{A_i}(x)$ such that $y$ has a neighbor $z\notin N_{A_i}[x]$, such a vertex exists since $d_{A_i}(x)<n_i-1$. By the maximality of the degree of $x$, there is at least one vertex $z'\in N_{A_i}(x)$ but $z'\notin N_{A_i}[y]$. This implies that $P_y(x)\not=\emptyset$ and $P_x(y)\not=\emptyset$, a contradiction. Hence $A_i$ is a subgraph of $S^+_{n_i,1}$, otherwise, $A_i$ has a path $P_{\ge 5}$ and hence $G$ has a cycle $C_{\ge 6}$, a contradiction.
By the maximality of the spectral radius of $G$ and Lemma~\ref{LEM: k=2}, we may assume $H'=A_t \cong S_{n_t,1}^+$. If $h\ge 4$, then, for any $v\in V(H')$, we have
\begin{equation*}
\begin{split}
B_v&=2e(N(v))+e(N(v), Y_v)-(2-2)d(v)-2(n-2)+(h+2)/3\\
&\le {2(n-1-h)+h-2(n-2)+(h+2)/3}\\
&\le 0.
\end{split}
\end{equation*}
If $h\le3$, again by the maximality of radius of $G$, we may assume $H\cong K_h$. Since $K_h\cong S_{h,1}^+$, we have $G\cong S_{h,1}^++S_{n_t, 1}^+$. By Lemma~\ref{LEM: k=2}, $\mu(G)<\mu(S_{n,k}^+)$, a contradiction. The claim holds.

\vskip 5pt
In the following, we assume $k\ge 3$.
If $d(v){ \le}3k$, we claim that $e(N(v),Y_v)> (k-1)Y_v$. If not, then $e(N(v),Y_v)\le (k-1)|Y_v|$. By (\ref{EQN: e9}),
\begin{equation*}
\begin{split}
B_v&\le d(v)(d(v)-1)+(k-1)|Y_v|-(k-2)d(v)-k(n-k)+(h+2)/3\\
&\le {(2k+1)^2+k-n+(n+1)/3}\\
&\le 0
\end{split}
\end{equation*}
the inequality holds because {$n\ge 13k^2$}, a contradiction. The claim is true. Therefore, $e(N(v),Y_v)> (k-1)|Y_v|$. With a same argument as in Subcase 1.2, we have $d(v)=s_v+t_v\le k+1$.
Since $G$ is $C_{\ge 2k+2}$-free,  there are at most $k$ points $x$ in $Y_v\cap V(H')$ such that $e(x, N(v))=k+1$. Hence $$e(S_v,Y_v)=e(S_v,Y_v\cap V(H'))+e(u, V(H))\le k(n-1-h-d(v))+k+h.$$
By (\ref{EQN: e9}),
\begin{equation*}
\begin{split}
B_v\leq &d(v)(d(v)-1)+k(n-1-h-d(v))+h+k\\
&-(k-2)d(v)-k(n-k)+(h+2)/3\\
=& {8/3+k+(4/3-k)h}\\
\leq& 0,
\end{split}
\end{equation*}
the last ineqaulity holds since {$h\ge 2k-2$ and $k\ge 3$}.

If $d(v){>} 3k$, then  Fact~\ref{FACT: EX-P_2k+3} implies that $e(N(v))\le d(v)(k-1)-(k^2-k)/2+1$ because $G$ is $C_{\ge 2k+2}$-free. By Claim~\ref{CLAIM: c4}, we have $s_v\le k$.
Hence $$e(S_v,Y_v)=e(S_v, Y_v\cap V(H'))+e(u, V(H))\le k(n-1-h-d(v))+h.$$
By (\ref{EQN: e9}), we have
\begin{equation*}
\begin{split}
B_v\leq &2d(v)(k-1)-k^2+k+2+k(n-1-h-d(v))+h\\
&-(k-2)d(v)-k(n-k)+(h+2)/3\\
=&8/3+4h/3-kh\\
\leq& 0,
\end{split}
\end{equation*}
the last inequality holds since {$h\ge 2k-2$ and $k\ge 3$}.







This  complets the proof of Theorem~\ref{:Main} (b).

\section{Concluding remarks}

In this paper, we prove a stronger version of Theorem~\ref{THM: P_2k+2} and a weak version of Conjecture~\ref{CONJ: conjecture 1}. We believe that the following result  also is true.
\begin{conj}\label{c2}
Let $k\geq2$, $L\ge 0$ and let $G$ be a graph of sufficiently large order $n$.

(a) If $\mu(G)\geq \mu(S_{n,k})$, then $G$ contains a $C_{\ell}$ with $\ell\in \{2k+1, 2k+2, \ldots, 2k+2+L\}$ unless $G=S_{n,k}$.

(b) If $\mu(G)\geq \mu(S_{n,k}^+)$, then $G$ contains a $C_{\ell}$ with $\ell\in \{2k+2, \ldots, 2k+2+L\}$ unless $G=S_{n,k}^+$.
\end{conj}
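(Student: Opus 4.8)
The plan is to deduce Conjecture~\ref{c2} from Theorem~\ref{:Main} together with a quantitative spectral-stability analysis of near-extremal graphs. For concreteness consider part (a); part (b) is parallel, with $S_{n,k}$ replaced by $S_{n,k}^+$ and the core clique enlarged by the extra independent-set edge. First I would fix $G$ with $\mu(G)\ge\mu(S_{n,k})$ and $G\neq S_{n,k}$; Theorem~\ref{:Main}(a) already supplies a cycle of length $\ge 2k+1$, so $c(G)\ge 2k+1$. The entire content of Conjecture~\ref{c2}(a) beyond Theorem~\ref{:Main} is therefore an \emph{upper} bound on the length of the shortest ``long'' cycle: I must produce one of length at most $2k+2+L$.

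Second, I would establish a structural (stability) description of $G$. Running the argument behind Claims~\ref{CLAIM: c3} and~\ref{CLAIM: c5} on a vertex $u$ with large column sum, one obtains a clique core $\{u\}\cup S_u$ (an induced $K_{s_u+1}$ with $s_u\in\{k-1,k,k+1\}$) together with a linear-size almost-independent set $Y_u$, each of whose vertices sends about $k$ edges into the core, and only a bounded number of ``extra'' edges (edges inside $Y_u$, or vertices of $Y_u$ with more than $k$ core-neighbours) that witness $G\neq S_{n,k}$. The crucial combinatorial observation is that in $S_{n,k}$ itself the circumference is exactly $2k$, since a cycle can use at most one independent vertex in each of the $k$ gaps between core vertices; hence \emph{any} cycle of length $>2k$ in $G$ must traverse at least one extra edge, so the long cycle supplied by Theorem~\ref{:Main} necessarily meets the extra structure.

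Third, I would build a short cycle directly from the core. Arranging the $k+1$ core vertices $\{u\}\cup S_u$ cyclically, inserting one $Y_u$-vertex into $j$ of the $k+1$ gaps yields a cycle of length $(k+1)+j$, while inserting the two endpoints of an extra edge of $Y_u$ into a single gap lets that gap hold two consecutive vertices. Since every $Y_u$-vertex has about $k$ core-neighbours and $|Y_u|$ is linear, there is enough freedom to realise each of the lengths $2k+1,2k+2,2k+3$; this is a length-controlled refinement of Lemma~\ref{LEM: P_2k+1}, in which, instead of merely guaranteeing a path $P_{\ge 2k+1}$, one prescribes the number of $Y_u$-vertices used and hence the exact cycle length. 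Provided the extra edge can be taken incident to the core's independent neighbourhood, the resulting cycle lies in $\{2k+1,\dots,2k+2+L\}$ for every $L\ge 0$; indeed this would even settle the $L=0$ case, namely Nikiforov's Conjecture~\ref{CONJ: conjecture 1}(a).

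That last caveat pinpoints where the difficulty actually lives. \textbf{The main obstacle is the stability step}: showing that a near-extremal $G$ with $G\neq S_{n,k}$ must carry its ``extra'' edges \emph{close} to the clique core, so that the short-cycle construction above applies with a detour of bounded length. The spectral-preserving moving-neighbour operation of Lemma~\ref{LEM: Move-Neighbor} only controls the circumference $c(G')\le c(G)$ and may in fact \emph{create} cycles of intermediate length; consequently avoiding an entire window $\{2k+1,\dots,2k+2+L\}$ of consecutive lengths is not monotone under this operation, and one cannot simply reduce to a canonical extremal graph as in the proof of Theorem~\ref{:Main}. I would instead attack this through a genuine eigenvector-localisation argument---the Perron vector concentrates on a $k$-set, with tiny weight on $Y_u$, forcing the extra edges to attach to high-weight vertices---combined with a cycle-spectrum lemma in the spirit of Liu and Ma guaranteeing cycles of many consecutive lengths emanating from a dense $2$-connected core. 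Making the length increment explicit---bounding it by the fixed constant $2+L$ rather than by a quantity growing with $n$---is precisely the quantitative heart of the problem, and the reason the full conjecture (the case $L=0$) remains open even though the existence of \emph{some} long cycle is already settled.
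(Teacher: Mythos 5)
You are addressing Conjecture~\ref{c2}, which the paper explicitly leaves open: the authors prove only Theorem~\ref{:Main}, observe that it settles the case $L=n-2k-2$, and note that Nikiforov's Conjecture~\ref{CONJ: conjecture 1} is the case $L=0$. So there is no proof in the paper to compare against, and the only question is whether your argument actually closes the conjecture. It does not, and you concede as much: your final paragraph flags the ``stability step'' as the unresolved heart of the matter and notes that $L=0$ remains open. An argument whose pivotal step is declared an open obstacle is a research programme, not a proof.

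The gap is concrete. Every structural ingredient you invoke in steps two and three --- the conclusions of Claims~\ref{CLAIM: c3} and~\ref{CLAIM: c5}, the clique core $\{u\}\cup S_u$ with $s_u\in\{k-1,k,k+1\}$, the assertion that each vertex of $Y_u$ sends about $k$ edges into the core with only boundedly many ``extra'' edges --- is derived in the paper under the global hypothesis that $G$ is $C_{\ge 2k+1}$-free (resp.\ $C_{\ge 2k+2}$-free), which is what feeds the Erd\H{o}s--Gallai-type bounds of Facts~\ref{FACT: EX-P_{2k+2}}, \ref{FACT: EX-P_2k+3} and~\ref{FACT: P_l} on $e(N(u))$ and $e(N(u),Y_u)$, and only after normalizing $G$ by Lemma~\ref{LEM: Move-Neighbor}. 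Under the hypotheses of Conjecture~\ref{c2} the graph merely avoids the window $\{2k+1,\dots,2k+2+L\}$: it may contain much longer cycles and be locally dense, so none of these edge bounds applies, and, as you yourself observe, the moving-neighbor operation only guarantees $c(G')\le c(G)$ and can create cycles of intermediate length, so window-avoidance is not preserved under the paper's reduction. Consequently your step three (the length-controlled insertion of $Y_u$-vertices into gaps of the core) rests entirely on a core structure that has not been established in the regime of the conjecture; the proposed fixes --- a Perron-vector localisation forcing extra edges near high-weight vertices, and a ``cycle-spectrum lemma in the spirit of Liu and Ma'' giving many consecutive cycle lengths --- are invoked but neither stated precisely nor proved, and the first of them is essentially equivalent to the open stability statement you set out to prove. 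What you have is a sensible reduction of Conjecture~\ref{c2} to two unproved assertions, together with a correct diagnosis of why the paper's method (which crucially forbids \emph{all} long cycles) cannot be transplanted; as a proof of the statement it is incomplete.
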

Theorem~\ref{:Main} states that Conjecture~\ref{c2} holds for $L=n-2k-2$ and Conjecture~\ref{CONJ: conjecture 1} hopes that Conjecture~\ref{c2} holds for $L=0$. 


\noindent {\bf Acknowledgement. } The research is
supported by National Natural Science Foundation of China (no.  11671376) and National Natural Science Foundation of Anhui Province (no. 1708085MA18).

\end{document}